\newtheorem{theorem}{Theorem}[section]
\newtheorem{proposition}[theorem]{Proposition}
\newtheorem{definition}[theorem]{Definition}
\newtheorem{lemma}[theorem]{Lemma}
\newtheorem{corollary}[theorem]{Corollary}
\newtheorem{remark}[theorem]{Remark}
\numberwithin{equation}{section}
\numberwithin{theorem}{section}
\newcommand{\mc}[1]{{\mathcal #1}}
\newcommand{\mf}[1]{{\mathfrak #1}}
\newcommand{\mb}[1]{{\mathbf #1}}
\newcommand{\bb}[1]{{\mathbb #1}}
\newcommand{\eps}{\varepsilon}
\newcommand{\upbar}[1]{\,\overline{\! #1}}
\newcommand{\downbar}[1]{\underline{#1\!}\,}
\newcommand{\Glimsup}{\mathop{\textrm{$\Gamma\!\!$--$\varlimsup$}}}
\newcommand{\Gliminf}{\mathop{\textrm{$\Gamma\!\!$--$\varliminf$}}}
\newcommand{\Glim}{\mathop{\textrm{$\Gamma\!\!$--$\lim$}}}
\newcommand{\un}[1]{{\mathbf 1}_{#1}}
\title{A $\Gamma$-convergence approach to Large deviations}
\author[M.\ Mariani]{Mauro Mariani} 
\address{Mauro Mariani, 
Faculty of Mathematics, National Research University Higher School of Economics,
6 Usacheva St.,
119048 Moscow, Russia}
\email{mmariani@hse.ru}
\subjclass[2010]{60F10,49N99} 
\begin{document}
\begin{abstract}
A rigorous connection between large deviations theory and $\Gamma$-convergence is established. Applications include representations formulas for rate functions, a contraction principle for measurable maps, a large deviations principle for coupled systems and a second order Sanov theorem.
\end{abstract}

\maketitle

\section{Introduction}
\label{s:1}

Let $X$ be a Polish space, that is a completely metrizable, separable topological space. The space $\mc P(X)$ of Borel probability measures on $X$ is a Polish space as well, if equipped with the so-called \emph{narrow} (otherwise called \emph{weak}) topology. Such a topology enjoys several characterizations, see \cite[Theorem~3.1.5]{St}. A sequence $(\mu_n)$ in $\mc P(X)$ converges narrowly to $\mu$ iff $\varlimsup_n \mu_n(C)\le \mu(C)$ or $\varliminf_n \mu_n(O)\ge \mu(O)$ for all $C\subset X$ closed and $O\subset X$ open,  or equivalently iff the integrals of bounded continuous functions converge.

A \emph{Large Deviations principle} (LDP) for $(\mu_n)$ on $X$ is then classically defined as an exponential version of the inequalities on closed and open sets stated above for the narrow convergence; and the Brycs-Varadhan theorem \cite[Chapter 4.4]{DZ} can be regarded as a Large Deviations' (LD) analog of the characterization of narrow convergence by the convergence of integrals of continuous bounded functions.

In this paper, we further extend the analogies between narrow convergence and LD to other characterizations. At least when $X$ is compact, it is easy to see that the narrow convergence $\mu_n \to \mu$ is equivalent to the $\Gamma$-convergence of the relative entropy functional $H(\cdot|\mu_n)$ to $H(\cdot|\mu)$ and also to the $\Gamma$-convergence of the maps $K\mapsto -\log \mu_n(K)$ to $K\mapsto -\log \mu(K)$, where the compact subsets $K$ of $X$ are equipped with the Hausdorff topology (which indeed coincides with the Kuratowski topology on compact sets). In section~\ref{s:3} we provide the LD analogs of these statements, proving in particular that LD is also a notion of convergence in a metric space $\mc W(X)$, containing both probability measures and functionals. Thus convergence of measures to measures in $\mc W(X)$ is equivalent to the narrow convergence, convergence of functionals to functionals is equivalent to $\Gamma$-convergence, and convergence of measures to functionals is indeed LD, see Theorem~\ref{t:topol} for a precise statement. It is worth to remark that various approaches to LD are possible by the means of variational analysis of the relative entropy functional \cite{DE}, the one in this paper being indeed inspired by the techniques in \cite{J,Ma}.

In section~\ref{s:4} we apply the results in section~\ref{s:3} to get some general properties of LDP. Proposition~\ref{p:exrepr} gives some explicit representations of the LD rate functionals, that generalize the so-called Laplace-Varadhan method for proving LDPs. In Proposition~\ref{p:contr} a version of a so-called contraction principle is provided for measurable (not just continuous) contraction maps. In Theorem~\ref{t:coupled} we give sufficient conditions to recover a LDP for a coupled system of metric random variables, from the LDPs for the (independent) components of an associated system with frozen variables. In Theorem~\ref{t:sanov}, we apply the results in section~\ref{s:3}-\ref{s:4} to provide a second order version of the Sanov theorem for triangular arrays of i.i.d.\ random variables whose law also satisfies a LDP (see the discussion in section~\ref{s:6} for applications).

\section{Preliminaries}
\label{s:2}
In this section we recall the basic notions concerning $\Gamma$-convergence and LD.
Hereafter $\mf B(X)$ denotes be the Borel $\sigma$-algebra on the Polish space
$X$ and $\mc P(X)$ the set of Borel probability measures on $X$. For $\mu \in \mc P(X)$ and $f$ a $\mu$-integrable function on $X$, $\mu(f)$ will denote the integral of $f$ with respect to $\mu$. $\mc P(X)$ is hereafter equipped with the narrow topology, namely the weakest topology such that the maps $\mathcal{P}(X) \ni \mu \mapsto \mu(f) \in \bb R$ are continuous, for all $f \in C_\mathrm{b}(X)$.

We also let $\mf K(X)$ be the collection of compact subsets of $X$, equipped with the Hausdorff topology\footnote{If $X$ is compact, the Hausdorff topology on $\mf K(X)$ coincides with the Kuratowski topology, see \cite[Chapter~4]{RW}. The latter is often considered in the theory of $\Gamma$-convergence, due its equivalence with the Kuratowski convergence of epigraphs, see e.g.\ \cite[Theorem~4.16]{DM}. Here we use a slightly different construction, as we lift functions and measures on $X$ to functionals on $\mf K(X)$. The Hausdorff topology comes more handy, since $\mf K(X)$ is itself Polish. The price to pay is that possibly some of the statements in the paper would extend to closed sets, not just compact sets, if the Kuratowski topology would be used. However, as long as one sticks with exponentially tight families of probabilities (or uniformly coercive functions) the two notions of convergence are equivalent, so that the price to pay for working with the Hausdorff topology is negligible for all the applications discussed here.
}. 
Namely, fixed a compatible distance $d$ on $X$, define $d_H\colon \mf K(X) \times \mf K(X) \to [0,+\infty[$ as
\begin{equation*}
d_H(K,K'):=\inf \{\eps>0,\,K\subset {K'}^\eps,\,K'\subset K^\eps\}
\end{equation*}
where, for $A\in \mf B(X)$, $A^\eps$ denotes the $\eps$-enlargement of $A$ with respect to the distance $d$. As well known, $d_H$ defines a distance on $\mf K(X)$, and the associated topology $\tau_H$ does not depend on the choice of the compatible distance $d$. Moreover, $(\mf K(X),\tau_H)$ is a Polish space, see \cite[Chapter~4]{RW}, and it is understood that $\mf K(X)$ is equipped with such a topology in the following.

\subsection{$\Gamma$-convergence}
\label{s:2.1}
$\Gamma$-convergence is the relevant notion of convergence for functionals, whenever problems related to minima and minimizers are investigated.
\begin{definition}
\label{d:Igood}
A functional $I\colon X\to [0,+\infty]$ is \emph{lower semicontinuous} iff
for each $\ell \ge 0$ the set $\{x\in X\,:\: I(x)\le \ell\}$ is closed. $I$ is \emph{coercive} iff
for each $\ell\ge 0$ the set $\{x\in X\,:\: I(x)\le \ell\}$ is precompact.


Let $(I_n)$ be a sequence of functionals $I_n\colon X\to [0,+\infty]$. $(I_n)$ is 
\emph{equicoercive} on $X$ iff for each $\ell>0$,
$\cup_n \{x \in X \,:\: I_n(x) \le \ell \}$ is precompact.
\end{definition}

\begin{definition}
\label{d:Glim}
  The $\Gamma$-liminf (also denoted $\Gliminf$) and $\Gamma$-limsup
  (also denoted $\Glimsup$) of a sequence $(I_n)$ of functionals $I_n \colon X \to [0,+\infty]$ are two functionals on $X$ defined as follows. For $x \in X$
  \begin{equation*}
    \begin{split}
\big( \Gliminf_n I_n \big) (x):=
     \inf \big\{ \varliminf_n I_n(x_n),\,
\text{
     $(x_n)$ sequence in $X$ such that  $x_n \to x$} \big\}
      \\
 \big( \Glimsup_n I_n \big) (x):=
     \inf \big\{ \varlimsup_n I_n(x_n),\,\text{
     $(x_n)$ sequence in $X$ such that  $x_n \to x$} \big\}
    \end{split}
  \end{equation*}
  Whenever $\Gliminf I_n =\Glimsup I_n=I$, $(I_n)$ is said to
  $\Gamma$-converge to $I$ in $X$, and $I$ is called the
  $\Gamma$-limit (also denoted $\Glim$) of $(I_n)$.
\end{definition}

\subsection{Large deviations}
\label{s:2.2}
Hereafter  $(\mu_n)$ is a sequence in $\mathcal{P}(X)$  and $(a_n)$ is a sequence of positive reals such that $\lim_n a_n=+\infty$.

\begin{definition}
\label{d:exptight}
The sequence $(\mu_n)$ is \emph{exponentially tight} with speed $(a_n)$ iff 
\begin{equation*}
  \label{e:exptight}
  \inf_{K \subset X,\,\text{compact}} \varlimsup_n \tfrac{1}{a_n} \log
  \mu_n(K^c)=-\infty
\end{equation*}
\end{definition}

\begin{definition}
\label{d:ld}
Let $I\colon X \to [0,+\infty]$ be a lower semicontinuous functional. Then $(\mu_n)$ satisfies
\begin{itemize}
 \item {A \emph{LD lower bound} with speed $(a_n)$
and rate $I$, iff for each open set $O \subset X$
\begin{equation*}
 \varliminf_n \tfrac{1}{a_n}  \log \mu_n(O) \ge - \inf_{x \in O}
I(x)
\end{equation*}
}

\item {A \emph{LD weak upper bound} with speed $(a_n)$
and rate $I$, iff for each compact $K \subset X$
\begin{equation*}
 \varlimsup_n \tfrac{1}{a_n} \log \mu_n(K) \le - \inf_{x \in K}
I(x)
\end{equation*}
}

\item {A \emph{LD upper bound} with speed $(a_n)$
and rate $I$, iff for each closed set $C \subset X$
\begin{equation*}
 \varlimsup_n \tfrac{1}{a_n} \log \mu_n(C) \le - \inf_{x \in C}
I(x)
\end{equation*}
}
\end{itemize}

$(\mu_n)$ satisfies a \emph{(weak) LDP} if both the lower and (weak) upper bounds hold with same rate
and speed.

\end{definition}
It is immediate to check that if  $(\mu_n)$ is exponentially tight and satisfies a weak LD upper bound, then it satisfies a LD upper bound.

\subsection{Relative entropy}
\label{ss:2.3}
Given $\mu,\,\nu \in \mathcal{P}(X)$ and $\mf F \subset \mf B(X)$ a
$\sigma$-algebra, the relative entropy of $\nu$ with respect to $\mu$ on $\mf F$ is defined as
\begin{equation}
\label{e:H1}
H_{\mf F}(\nu|\mu):=
 \sup_{\varphi} \big\{\nu(\varphi) - \log \mu(e^{\varphi}) \big\}
\end{equation}
where the supremum runs over the bounded $\mf F$-measurable
functions $\varphi$ on $X$. For a fixed $\mu$, $H_{\mf F}(\cdot|\mu)$ is a positive, convex functional on $\mathcal{P}(X)$. If $\mf F=\mf B(X)$, the subindex $\mf F$ will be dropped hereafter. In such a case, $H(\cdot|\mu)$ is also lower semicontinuous and coercive on $\mc P(X)$.

\subsection{Regular set-maps}

If $a>0$, $\mu\in \mathcal{P}(X)$ and $I\colon X\to [0,+\infty]$ a lower semicontinuous functional, define the set-maps $l_{a,\mu},\,l_I \colon  \mf K(X) \to [0,+\infty]$ as
\begin{equation}
\label{e:lamu}
l_{a,\mu}(K)= -\tfrac{1}{a} \log \mu(K)
\end{equation}
\begin{equation}
\label{e:lI}
l_{I}(K)=\inf_{x\in K} I(x)
\end{equation}
Since probability measures are regular on Polish spaces, it is easy to check that $l_{a,\mu}$ is lower semicontinuous on $\mf K(X)$, while the lower semicontinuity of $I$ implies that $l_I$ is lower semicontinuous as well.

\section{Large Deviations and $\Gamma$-convergence}
\label{s:3}
 The equivalence of probabilistic statements concerning LD (labeled P), $\Gamma$-convergence statements concerning relative entropies (labeled H) and set-maps (labeled L) is established in this section.

An equivalent formulation of narrow convergence of probability measures is first introduced in section~\ref{ss:3.1}. Although only needed in proofs to appear later in the paper, it gives an easy example of the ideas concerning the analogous LD statements in section~\ref{ss:3.2}. Proofs are provided in section~\ref{ss:3.3}.

\subsection{Weak convergence and relative entropy}
\label{ss:3.1}
Let $(\mu_n)$ be a sequence in $ \mathcal{P}(X)$, and define $H_n \colon \mathcal{P}(X) \to
[0,+\infty]$ as
\begin{equation*}
H_n(\nu):= H(\nu|\mu_n)
\end{equation*}
The parameter $a>0$ has no special role in the next two propositions, one could fix $a=1$. Yet, it will become relevant when LD are considered.

\begin{proposition}
\label{p:tight}
The following are equivalent.
\begin{itemize}
\item[(P)]{$(\mu_n)$ is tight in $\mathcal{P}(X)$.}
\item[(H)]{$(H_n)$ is equicoercive on $\mc P(X)$.}
\item[(L)]{For $a>0$,  $(l_{a,\mu_n})$ is equicoercive on $\mf K(X)$.}
\end{itemize}
\end{proposition}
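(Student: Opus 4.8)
The plan is to establish the two equivalences (P)$\,\Leftrightarrow\,$(H) and (P)$\,\Leftrightarrow\,$(L) separately. Three standard facts carry most of the weight: Prokhorov's theorem (on the Polish space $X$, a subset of $\mc P(X)$ is tight iff it is precompact in the narrow topology); Ulam's inner regularity theorem (each $\mu_n$ carries all but an arbitrarily small amount of mass on a suitable compact set); and the hyperspace dictionary that, $X$ being Polish, a family $\mc A\subset\mf K(X)$ is $\tau_H$-precompact iff $\overline{\cup_{K\in\mc A}K}$ is compact in $X$ (since $\mf K(Y)$ is compact and closed in $\mf K(X)$ whenever $Y$ is compact, while the union of a $\tau_H$-compact family of compacta is compact).

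For (H)$\,\Leftrightarrow\,$(P): the implication (H)$\,\Rightarrow\,$(P) is immediate, since $H(\mu_n|\mu_n)=0$ places each $\mu_n$ in every sublevel set $\{\nu:H_n(\nu)\le\ell\}$, so $\{\mu_n\}_n$ lies inside the precompact set $\cup_n\{\nu:H_n(\nu)\le\ell\}$ and is tight by Prokhorov. For (P)$\,\Rightarrow\,$(H), fix $\ell>0$; by Prokhorov it suffices to show $\cup_n\{\nu:H_n(\nu)\le\ell\}$ is tight. The tool is the entropy inequality: using \eqref{e:H1} with $\mf F=\mf B(X)$ and then with $\mf F$ the two-point $\sigma$-algebra $\sigma(A)$ gives $H_{\sigma(A)}(\nu|\mu)\le H(\nu|\mu)$, and evaluating the left side (the relative entropy of the Bernoulli law $(\nu(A),\nu(A^{c}))$ with respect to $(\mu(A),\mu(A^{c}))$), together with $x\log x\ge-1/\mathrm e$, yields $\nu(A)\log(1/\mu(A))\le H(\nu|\mu)+2/\mathrm e$ whenever $\mu(A)<1$. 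Given $\eps>0$, pick $\delta\in(0,1)$ with $(\ell+2/\mathrm e)/\log(1/\delta)<\eps$ and, by (P), a compact $K$ with $\sup_n\mu_n(K^{c})<\delta$; then every $\nu$ with $H_n(\nu)\le\ell$ for some $n$ has $\nu(K^{c})<\eps$ (the case $\mu_n(K^{c})=0$ being trivial, since $H_n(\nu)\le\ell$ forces $\nu\ll\mu_n$). Hence $\cup_n\{\nu:H_n(\nu)\le\ell\}$ is tight, which is (H).

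For (L)$\,\Leftrightarrow\,$(P): rewrite $\{K:l_{a,\mu_n}(K)\le\ell\}=\{K\in\mf K(X):\mu_n(K)\ge\mathrm e^{-a\ell}\}$. These raw sublevel sets are not $\tau_H$-precompact for non-compact $X$, because one may adjoin an arbitrary point to any compact set without lowering $\mu_n$; the operative content of equicoercivity is rather that, after intersecting each such $K$ with a single compact set $K_*$ and allowing an arbitrarily small relaxation of the level, the sublevel sets come (uniformly in $n$) to lie inside the one compact set $\mf K(K_*)$. For (P)$\,\Rightarrow\,$(L): given $\ell,\eps>0$, put $\delta:=\mathrm e^{-a\ell}-\mathrm e^{-a(\ell+\eps)}>0$ and, by (P), take a compact $K_*$ with $\sup_n\mu_n(K_*^{c})<\delta$; then $\mu_n(K\cap K_*)\ge\mu_n(K)-\mu_n(K_*^{c})>\mathrm e^{-a(\ell+\eps)}$ for every $K$ in the sublevel set, so $l_{a,\mu_n}(K\cap K_*)\le\ell+\eps$ and $K\cap K_*\in\mf K(K_*)$. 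For (L)$\,\Rightarrow\,$(P): given $\delta>0$, use Ulam to pick, for each $n$, a compact $C_n$ with $\mu_n(C_n)>1-\delta/2$, so that $C_n\in\{K:l_{a,\mu_n}(K)\le\ell\}$ with $\ell:=-\tfrac1a\log(1-\delta/2)$ uniformly in $n$; choosing $\eps>0$ small enough that $(1-\delta/2)\mathrm e^{-a\eps}>1-\delta$ and invoking equicoercivity at level $\ell$ produces a compact $K_*$ with $\mu_n(K_*)\ge\mu_n(C_n\cap K_*)\ge\mathrm e^{-a(\ell+\eps)}=(1-\delta/2)\mathrm e^{-a\eps}>1-\delta$ for all $n$; hence $\sup_n\mu_n(K_*^{c})<\delta$, and since $\delta$ was arbitrary, $(\mu_n)$ is tight.

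The routine part is (H)$\,\Leftrightarrow\,$(P), once the entropy inequality is recorded. The delicate point is (L): because the literal Hausdorff-sublevel sets of $l_{a,\mu_n}$ fail to be precompact, one must work with the notion of equicoercivity proper to set-maps on $\mf K(X)$, and then track the multiplicative slack $\mathrm e^{-a\eps}$ carefully through the mass estimates in both implications; everything else reduces to Prokhorov and Ulam.
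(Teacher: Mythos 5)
Your (P)$\Leftrightarrow$(H) argument is correct and is essentially the paper's own: (H)$\Rightarrow$(P) because $H_n(\mu_n)=0$ puts the sequence inside a single precompact sublevel set, and (P)$\Rightarrow$(H) via an entropy estimate on the two-set partition $\{A,A^c\}$ together with Prokhorov's theorem; the paper invokes \eqref{e:H4}, where you derive the equivalent bound $\nu(A)\log(1/\mu(A))\le H(\nu|\mu)+2/\mathrm{e}$, an immaterial difference.

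The (L) equivalence is where you part ways with the paper, which dismisses it as trivial. Your observation is accurate: the literal sublevel sets $\{K\in\mf K(X):\mu_n(K)\ge e^{-a\ell}\}$ are stable under adjoining arbitrary points, so for non-compact $X$ (and $\ell$ large enough that they are nonempty) the union of their members is all of $X$ and they are never $\tau_H$-precompact; thus Definition~\ref{d:Igood} applied verbatim to functionals on $\mf K(X)$ makes (L) unprovable rather than trivial, and this is a genuine imprecision in the paper. The reading under which the paper's claim really is immediate is coercivity measured on sets escaping compacts: by inner regularity, $\inf\{l_{a,\mu_n}(K):K\in\mf K(X),\ K\subset L^c\}=-\tfrac1a\log\mu_n(L^c)$ for $L$ compact, so asking that for every $\ell$ there be one compact $L$ with $l_{a,\mu_n}(K)\ge\ell$ for all $n$ and all compact $K\subset L^c$ is verbatim uniform tightness with $\delta=e^{-a\ell}$. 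What you do instead is introduce your own surrogate notion (intersect each $K$ in a sublevel set with a single compact $K_*$, at the price of an $\eps$-relaxation of the level) and prove (P) equivalent to that; the estimates in both directions, including the Ulam step and the $e^{-a\eps}$ bookkeeping, are correct, and your condition is indeed equivalent to tightness, hence to the reading above. But it is a definition of equicoercivity that appears nowhere in the paper, so as written the (L) item of the stated proposition is replaced rather than proved; that substitution should be flagged explicitly as a (needed) amendment of the statement, not folded in silently as its ``operative content''.
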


\begin{proposition}
\label{p:conv}
The following are equivalent.
\begin{itemize}
\item[(P1)]{$\mu_n \to \mu$ in $\mathcal{P}(X)$.}

\item[(P2)]{For each sequence $(\varphi_n)$ of Borel measurable
    functions $\varphi_n\colon X \to \bar{\bb R}$ bounded from below
\begin{equation*}
\varliminf_n \mu_n (\varphi_n) 
       \ge \mu(\Gliminf_n \varphi_n)
\end{equation*}
}
 
\item[(H)]{$ (\Glim_n H_n)(\nu)=H(\nu|\mu)$. }

\item[(L1)]{For $a>0$, $\Glim_n l_{a,\mu_n}=l_{a,\mu}$.}

\item[(L2)]{For $a>0$, $\Glimsup_n l_{a,\mu_n} \le l_{a,\mu}$.}

\item[(L3)]{For $a>0$, $\Gliminf_n l_{a,\mu_n} \ge l_{a,\mu}$.}

\end{itemize}
\end{proposition}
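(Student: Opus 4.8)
The plan is to take (P1) as a hub and prove that each of (P2), (H), (L1), (L2), (L3) is equivalent to it. The equivalence (L1) $\Leftrightarrow$ [(L2) and (L3)] is free: since $\Gliminf_n l_{a,\mu_n}\le\Glimsup_n l_{a,\mu_n}$ always, $\Glim_n l_{a,\mu_n}=l_{a,\mu}$ holds precisely when both one-sided inequalities do. It is clarifying to first unfold the set-map statements, using that $K_n\to K$ in $\mf K(X)$ means exactly that $K_n\subset K^\eps$ and $K\subset K_n^\eps$ eventually, for every $\eps>0$: (L2) reads ``for every $K\in\mf K(X)$ there is a sequence $K_n\to K$ with $\varliminf_n\mu_n(K_n)\ge\mu(K)$'', and (L3) reads ``$\varlimsup_n\mu_n(K_n)\le\mu(K)$ for every $K$ and every $K_n\to K$'' --- two one-sided bounds to be matched against the Portmanteau characterization of narrow convergence.

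For (P2) $\Rightarrow$ (P1) I would feed (P2) the constant sequence $\varphi_n\equiv\mb 1_O$ with $O\subset X$ open: $\mb 1_O$ is lower semicontinuous, so $\Gliminf_n\mb 1_O=\mb 1_O$, and (P2) yields $\varliminf_n\mu_n(O)\ge\mu(O)$, which is (P1). For (P1) $\Rightarrow$ (P2), after shifting by a constant and truncating one reduces to $0\le\varphi_n\le M$ (shifts and the maps $\cdot\wedge M$ interact monotonically with $\Gamma$-lower limits, and monotone convergence restores the general case); then, writing $\mu_n(\varphi_n)=\int_0^M\mu_n(\{\varphi_n>t\})\,dt$ and likewise for $\psi:=\Gliminf_n\varphi_n$, Fatou in $t$ reduces the claim to $\varliminf_n\mu_n(\{\varphi_n>t\})\ge\mu(\{\psi>t\})$ for each $t$. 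The crux is that $\psi(x)>t$ forces an open $U\ni x$ and an $N$ with $U\subset\{\varphi_n>t\}$ for all $n\ge N$ (otherwise one builds $z_j\to x$ with $\varphi_j(z_j)\le t$ along a subsequence, contradicting $(\Gliminf_n\varphi_n)(x)>t$). Covering a compact $K\subset\{\psi>t\}$ by finitely many such $U$'s gives an open $V$ with $K\subset V\subset\{\varphi_n>t\}$ for all large $n$, whence $\varliminf_n\mu_n(\{\varphi_n>t\})\ge\varliminf_n\mu_n(V)\ge\mu(V)\ge\mu(K)$ by (P1); take the supremum over $K$ by inner regularity of $\mu$.

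For (L2) $\Rightarrow$ (P1): given $O$ open and a compact $K\subset O$, a recovery sequence $K_n\to K$ eventually lies in $K^\delta\subset O$ with $\delta:=d(K,X\setminus O)>0$, so $\varliminf_n\mu_n(O)\ge\varliminf_n\mu_n(K_n)\ge\mu(K)$, and a supremum over $K$ gives (P1). For (P1) $\Rightarrow$ (L2) one uses that a narrowly convergent sequence is tight: fixing, for $\eta>0$, a compact $C\supset K$ with $\sup_n\mu_n(X\setminus C)<\eta$, the compact set $L:=\overline{K^\eps}\cap C$ satisfies $K\subset L\subset K^{2\eps}$ (so $d_H(L,K)\le2\eps$) and $\mu_n(L)\ge\mu_n(K^\eps)-\eta$, hence $\varliminf_n\mu_n(L)\ge\mu(K)-\eta$; a diagonal choice $\eps=\eta=1/k$ produces the required $K_n\to K$. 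For (P1) $\Rightarrow$ (L3): any $K_n\to K$ lies in the closed set $\overline{K^\delta}$ for all large $n$, so $\varlimsup_n\mu_n(K_n)\le\mu(\overline{K^\delta})$ by (P1), and $\mu(\overline{K^\delta})\downarrow\mu(K)$ as $\delta\downarrow0$. The remaining implication (L3) $\Rightarrow$ (P1) is the step I expect to be the main obstacle, and a genuinely one-sided phenomenon: since (L3) tests the $\mu_n$ only against compacta, on a non-compact $X$ it cannot by itself exclude escape of mass to infinity (so narrow convergence may fail while (L3) holds against the candidate limit), and the argument must invoke tightness of $(\mu_n)$, the hypothesis characterized in Proposition~\ref{p:tight}. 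Granted that, for $\eta>0$ pick a compact $C$ with $\sup_n\mu_n(X\setminus C)<\eta$ and apply (L3) to $C'\cap C$ with the constant sequence to get $\varlimsup_n\mu_n(C')\le\mu(C'\cap C)+\eta\le\mu(C')+\eta$ for every closed $C'$; letting $\eta\downarrow0$ yields the Portmanteau upper bound, i.e.\ (P1).

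Finally, (P1) $\Leftrightarrow$ (H). For $\Gliminf_n H_n\ge H(\cdot|\mu)$ I would use the variational (Donsker--Varadhan) formula $H(\nu|\mu)=\sup_{\varphi\in C_b(X)}\{\nu(\varphi)-\log\mu(e^\varphi)\}$: for $\nu_n\to\nu$ and each fixed $\varphi\in C_b(X)$, $\varliminf_n H(\nu_n|\mu_n)\ge\varliminf_n\{\nu_n(\varphi)-\log\mu_n(e^\varphi)\}=\nu(\varphi)-\log\mu(e^\varphi)$ by (P1), and one takes the supremum over $\varphi$. For $\Glimsup_n H_n\le H(\cdot|\mu)$ only the case $H(\nu|\mu)<\infty$ is nontrivial; with $g:=d\nu/d\mu$ approximated by bounded continuous $g_k$ bounded away from $0$, the measures $\nu^{(k)}_n:=g_k\mu_n/\mu_n(g_k)$ converge narrowly to $g_k\mu/\mu(g_k)$ with $H(\nu^{(k)}_n|\mu_n)\to H(g_k\mu/\mu(g_k)|\mu)$ as $n\to\infty$ (using (P1)), and the latter tends to $H(\nu|\mu)$ as $k\to\infty$, so a diagonal sequence realizes $\Glimsup_n H_n(\nu)\le H(\nu|\mu)$. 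Conversely, (H) gives in particular $\Glimsup_n H_n(\mu)\le H(\mu|\mu)=0$, i.e.\ there exist $\nu_n\to\mu$ with $H(\nu_n|\mu_n)\to0$; Pinsker's inequality $\|\nu_n-\mu_n\|_{\mathrm{TV}}^2\le\tfrac12H(\nu_n|\mu_n)$ then forces $\mu_n-\nu_n\to0$ in total variation, hence $\mu_n\to\mu$ narrowly.
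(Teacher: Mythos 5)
Most of your proposal is correct, and in several places it takes a genuinely different route from the paper. For (P1)$\Rightarrow$(P2) the paper builds the finite partitions of Remark~\ref{r:E} (cells with $\mu$-null boundary) and passes to the limit by monotone convergence of piecewise infima, whereas you use the layer-cake formula, the local characterization of the $\Gamma$-liminf and inner regularity; for the $\Gamma$-limsup half of (H) the paper's recovery sequence redistributes $\nu$ over the partition cells proportionally to $\mu_n$, while you regularize the density $d\nu/d\mu$ -- both work, though your version still needs the (routine but unstated) check that the $g_k$ can be chosen so that $\mu(g_k\log g_k)\to\mu(g\log g)$ and $\mu(g_k)\to 1$. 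Your Pinsker argument for (H)$\Rightarrow$(P1) is arguably cleaner than the paper's appeal to convergence of minimizers, and your treatment of (P1)$\Leftrightarrow$(L2) and (P1)$\Rightarrow$(L3) agrees in substance with the paper's (one small repair: in (P1)$\Rightarrow$(L2) the set $L=\overline{K^\eps}\cap C$ need not contain $K$; take $C\cup K$ in place of $C$). Note also that both you and the paper implicitly read ``bounded from below'' in (P2) as a uniform bound on the sequence.

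The genuine divergence is (L3)$\Rightarrow$(P1). As a proof of the proposition \emph{as stated} your argument has a gap: you invoke tightness of $(\mu_n)$, which is not among the hypotheses -- Proposition~\ref{p:tight} characterizes tightness, it does not grant it -- so what you prove is ``(L3) together with tightness implies (P1)''. The paper proceeds differently: by sequential compactness of $\Gamma$-convergence, from every subsequence one extracts a further subsequence along which $l_{a,\mu_{n''}}$ $\Gamma$-converges, the limit is identified with $l_{a,\mu}$, and then the already proved implication (L2)$\Rightarrow$(P1) and the Urysohn property conclude. That said, your diagnosis of the obstruction is substantive: (L3) by itself only says that any subsequential $\Gamma$-limit dominates $l_{a,\mu}$, and it is blind to escape of mass -- for instance $X=\bb R$, $\mu_n=\delta_n$ gives $\Gliminf_n l_{a,\mu_n}\equiv+\infty\ge l_{a,\mu}$ for every $\mu$, while $\mu_n\not\to\mu$ -- so the identification of the subsequential $\Gamma$-limit with $l_{a,\mu}$ is exactly the point where (L3) alone gives only one inequality and some tightness-type input (such as the one you add) is required. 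In short: relative to the stated proposition your (L3)$\Rightarrow$(P1) step is missing, but the difficulty you flag is real and touches a delicate point in the paper's own argument for this implication.
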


\subsection{Large Deviations and relative entropy}
\label{ss:3.2}
In this section the LD analogs of
Proposition~\ref{p:tight} and Proposition~\ref{p:conv} are
stated. Hereafter $\mb a=(a_n)$ is a sequence of strictly positive real
numbers such that $\lim_n a_n=+\infty$, $(\mu_n)$ is a sequence in
$\mathcal{P}(X)$ and $I \colon X \to [0,+\infty]$ a measurable function. Define
$H^{\mb a}_n\colon \mathcal{P}(X) \to [0,+\infty]$ as
\begin{equation*}
H_n^{\mb a}(\nu):=\tfrac{1}{a_n} H(\nu|\mu_n)
\end{equation*}
and, recalling \eqref{e:lamu}, $l^{\mb a}_n \colon \mf K(X) \to [0,+\infty]$ as
\begin{equation*}
l^{\mb a}_n=l_{a_n,\mu_n}
\end{equation*}

\begin{theorem}
\label{t:tightld}
The following are equivalent.
\begin{itemize}
\item[(P)]{$(\mu_n)$ is exponentially tight with speed $(a_n)$.}
\item[(H)]{$(H^{\mb a}_n)$ is equicoercive.}
\item[(L)]{$(l^{\mb a}_n)$ is equicoercive.}
\end{itemize}
\end{theorem}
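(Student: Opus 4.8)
The plan is to establish the cycle of implications $(\mathrm{P})\Rightarrow(\mathrm{L})\Rightarrow(\mathrm{H})\Rightarrow(\mathrm{P})$, exploiting the fact that all three conditions are statements about the level sets $\{\,\cdot\le\ell\,\}$ being precompact, translated between the three ambient spaces $X$, $\mf K(X)$ and $\mc P(X)$. Throughout I would fix $\ell>0$ and analyse the corresponding sublevel sets; the key numerical bookkeeping is that $l_n^{\mb a}(K)\le\ell$ means $\mu_n(K)\ge e^{-a_n\ell}$, that exponential tightness with speed $(a_n)$ gives, for every $M>0$, a compact $K_M$ with $\mu_n(K_M^c)\le e^{-a_nM}$ for all large $n$, and that $H_n^{\mb a}(\nu)\le\ell$ together with the entropy inequality $\nu(A)\le \frac{\log 2 + a_n\ell}{\log\big(1+1/\mu_n(A)\big)}$ (or the cruder $\nu(A)\le \frac{H(\nu|\mu_n)+\log 2}{\log(1/\mu_n(A))}$) forces $\nu(A)$ small whenever $\mu_n(A)$ is small.

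For $(\mathrm{P})\Rightarrow(\mathrm{L})$: given $\ell$, pick by exponential tightness a compact $K_{\ell}$ with $\varlimsup_n \frac{1}{a_n}\log\mu_n(K_\ell^c)<-\ell$, so that $\mu_n(K_\ell^c)<e^{-a_n\ell}$ for all $n\ge n_0$. If $l_n^{\mb a}(K)\le\ell$ for some $n\ge n_0$ then $\mu_n(K)\ge e^{-a_n\ell}>\mu_n(K_\ell^c)$, hence $K$ cannot be contained in $K_\ell^c$, i.e.\ $K\cap K_\ell\neq\emptyset$. More is needed: I want the whole family $\bigcup_{n\ge n_0}\{K: l_n^{\mb a}(K)\le\ell\}$ to be precompact in $(\mf K(X),\tau_H)$. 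Using a slightly finer choice — a nested family of compacts $K_M$ with $\mu_n(K_M^c)<e^{-a_nM}$ — one shows every such $K$ intersects every $K_M$; combining this with the standard fact that a subset of $\mf K(X)$ is $\tau_H$-precompact iff it is ``uniformly contained in a compact set'' (precisely: the union of its members is relatively compact, equivalently it is uniformly tight as a family of closed sets), I would instead argue that $\mu_n(K)\ge e^{-a_n\ell}$ and $\mu_n(K_M^c)<e^{-a_nM}$ with $M>\ell$ force $\mu_n(K\setminus K_M)<e^{-a_nM}<e^{-a_n\ell}\le\mu_n(K)$, so $K\cap K_M\neq\emptyset$, and then pass to the closure: in fact the cleanest route is to note that $l_n^{\mb a}(K)\le\ell$ implies $K\subset \{x: l_{a_n,\mu_n}(\{x\}\text{-neighbourhoods})\}$ is not the point — rather one shows directly that $\bigcup_n\{K:l_n^{\mb a}(K)\le\ell\}\subset \mf K(K_{M})$ for suitable $M$ after discarding finitely many $n$, up to the usual separate treatment of the finitely many remaining indices, each of which contributes only a $\tau_H$-compact family since a single $l_{a,\mu}$ is coercive on $\mf K(X)$.

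For $(\mathrm{L})\Rightarrow(\mathrm{H})$ and $(\mathrm{H})\Rightarrow(\mathrm{P})$ I would close the loop via the entropy inequality and Prokhorov. For $(\mathrm{H})\Rightarrow(\mathrm{P})$: suppose $(\mu_n)$ is not exponentially tight, so there is $M>0$ with $\varlimsup_n \frac1{a_n}\log\mu_n(K^c)\ge -M$ for every compact $K$; I will manufacture $\nu_n$ with $H_n^{\mb a}(\nu_n)$ bounded but $(\nu_n)$ not tight, contradicting equicoercivity of $(H_n^{\mb a})$. The natural choice is $\nu_n=\mu_n(\cdot\mid A_n)$ for sets $A_n$ with $\mu_n(A_n)\ge e^{-c a_n}$ concentrated away from compacts: then $H(\nu_n|\mu_n)=\log\frac1{\mu_n(A_n)}\le c\,a_n$, so $H_n^{\mb a}(\nu_n)\le c$, while $\nu_n(K^c)$ stays bounded below for every compact $K$. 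Conversely $(\mathrm{L})\Rightarrow(\mathrm{H})$ follows by contraposition in the same spirit, or directly: if $H_n^{\mb a}(\nu)\le\ell$ then for the compact $K_M$ coming from the assumed $\tau_H$-equicoercivity at level $M$ (which, via $l_n^{\mb a}$, encodes $\mu_n(K_M^c)$ small), the entropy inequality bounds $\nu(K_M^c)$ by $\frac{a_n\ell+\log 2}{-\log\mu_n(K_M^c)}\le \frac{\ell+\log2/a_n}{M-\ell}$, which is $<\tfrac12$ for $M$ large, giving uniform tightness of the sublevel set, hence its precompactness by Prokhorov.

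The main obstacle I anticipate is the careful handling of ``for all large $n$'' versus ``for all $n$'' in $(\mathrm{P})\Rightarrow(\mathrm{L})$ and in identifying precompactness in $(\mf K(X),\tau_H)$: one must verify that uniform containment of the $K$'s in a fixed compact set (after excising finitely many indices $n$, each harmless because a single $l_{a,\mu}$ is already coercive on $\mf K(X)$ as noted in section~\ref{ss:2.3}) is equivalent to $\tau_H$-precompactness. This is where the topology of $\mf K(X)$ — and the fact that $\mf K(K_0)$ is $\tau_H$-compact whenever $K_0$ is compact — does the real work; the probabilistic and entropic estimates themselves are the elementary inequalities recalled above.
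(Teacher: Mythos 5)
Your cycle breaks at (P)$\Rightarrow$(L), and since that is your only route towards (H), the substantive implication (P)$\Rightarrow$(H) is not established. The step that fails is the claimed containment $\bigcup_{n\ge n_0}\{K\,:\, l^{\mb a}_n(K)\le\ell\}\subset \mf K(K_M)$ for a suitable compact $K_M$: if $\mu_n(K)\ge e^{-a_n\ell}$ and $x\in X$ is arbitrary, then $K\cup\{x\}$ satisfies the same bound, so the sublevel sets of $l^{\mb a}_n$ are stable under adjoining arbitrarily remote points and are never contained in $\mf K(K_M)$ unless $X$ itself is compact. Your measure comparison yields only $K\cap K_M\neq\emptyset$, which is strictly weaker and does not give Hausdorff precompactness under the criterion you yourself quote (the union of the members must be relatively compact in $X$); your text visibly stalls at exactly this point without supplying the missing argument, because none is available along these lines. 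The auxiliary claim that a single $l_{a,\mu}$ is coercive on $\mf K(X)$ is wrong for the same reason --- the paper records only lower semicontinuity of $l_{a,\mu}$, not coercivity.

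The other two thirds of your proposal are sound and close to the paper. Your (L)$\Rightarrow$(H) (equivalently (L)$\Rightarrow$(P) by inner regularity of $\mu_n$, then the entropy bound \eqref{e:H4}) is fine, and your (H)$\Rightarrow$(P) by contraposition, using conditioned measures with $H(\mu_n^{A}|\mu_n)=-\log\mu_n(A)$, is essentially the paper's own argument, which considers the family $\mu_n^{K^c}$ over all compacts $K$ with $\mu_n(K^c)\ge e^{-\ell a_n}$ and extracts a single compact $K_\ell$ from the tightness of that family. The paper avoids your problematic step entirely: it proves (P)$\Rightarrow$(H) \emph{directly}, applying \eqref{e:H4} with $A=K_\ell^c$ where $K_\ell$ is furnished straight by exponential tightness --- i.e.\ exactly the estimate you deploy in (L)$\Rightarrow$(H), but fed by (P) rather than by (L) --- and it handles (P)$\Leftrightarrow$(L) separately, so the hyperspace precompactness never has to carry the entropic implication. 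Rerouting your argument in this way (drop the detour through (L) and run your entropy estimate from the compacts given by (P)) repairs the proposal.
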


\begin{theorem}
\label{t:convldlow}
The following are equivalent.
\begin{itemize}
\item[(P1)]{$(\mu_n)$ satisfies a LD lower
    bound with speed $(a_n)$ and rate $I$.}

\item[(P2)]{For each sequence $(\varphi_n)$ of measurable
    maps $\varphi_n\colon X \to \overline{\bb R}$
\begin{equation*}
  \varliminf_n \tfrac 1{a_n} \log \mu_n \big(\exp(a_n \varphi_n) \big) 
  \ge \sup_{x \in X} 
        \big\{\big(\Gliminf_n \varphi_n \big)(x) - I(x)\big\}
\end{equation*}
where one understands $\big(\Gliminf_n \varphi_n \big)(x) - I(x)=-\infty$
whenever $I(x)=+\infty$. }

\item[(H1)]{For each $x \in X$, $\big(\Glimsup_n
    H^{\mb a}_n\big)(\delta_x) \le I(x)$, where $\delta_x \in \mathcal{P}(X)$ is
    the Dirac mass concentrated at $x$.}

\item[(H2)]{For each $\nu \in \mathcal{P}(X)$, $\big(\Glimsup_n
    H^{\mb a}_n\big)(\nu) \le \nu(I)$.}
\end{itemize}

If $I$ is lower semicontinuous, the above statements are also equivalent to
\begin{itemize}   
\item[(L)]{ $\Glimsup_n l^{\mb a}_n \le l_I$.}
\end{itemize}
\end{theorem}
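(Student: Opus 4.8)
The plan is to prove the equivalences (P1) $\Leftrightarrow$ (P2) $\Leftrightarrow$ (H1) $\Leftrightarrow$ (H2) for an arbitrary measurable $I$, and then, when $I$ is lower semicontinuous, to adjoin (L) through (P1) $\Rightarrow$ (L) $\Rightarrow$ (P1). The whole argument rests on two elementary facts about relative entropy. First, the Donsker--Varadhan bound $H(\nu|\mu)\ge\nu(\psi)-\log\mu(e^{\psi})$ for every bounded measurable $\psi$, read directly off \eqref{e:H1}. Second, for a Borel set $B$ with $\mu(B)>0$, the identity $H(\nu_B|\mu)=-\log\mu(B)$, where $\nu_B:=\mu(B)^{-1}\mathbf 1_B\,\mu$ is the normalized restriction of $\mu$ to $B$; this is an immediate computation from \eqref{e:H1}. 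The first fact turns entropy bounds into lower bounds on $\mu_n(O)$; the second manufactures recovery sequences for $\Glimsup_n H^{\mb a}_n$.

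The equivalence (P1) $\Leftrightarrow$ (P2) is the soft part. For $\Rightarrow$, fix $x$ with $I(x)<\infty$; for every open $O\ni x$, $\mu_n(\exp(a_n\varphi_n))\ge e^{a_n\inf_{y\in O}\varphi_n(y)}\mu_n(O)$, so (P1) gives $\varliminf_n\tfrac1{a_n}\log\mu_n(\exp(a_n\varphi_n))\ge\varliminf_n\inf_{y\in O}\varphi_n(y)-I(x)$; taking the supremum over $O$ (using $(\Gliminf_n\varphi_n)(x)=\sup_{O\ni x\,\mathrm{open}}\varliminf_n\inf_{y\in O}\varphi_n(y)$) and then over $x$ yields (P2). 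For $\Leftarrow$, test (P2) with the constant sequence $\varphi_n\equiv-M\mathbf 1_{O^c}$, which is lower semicontinuous and hence equals its own $\Gamma$-$\varliminf$; since $\mu_n(e^{-a_nM\mathbf 1_{O^c}})\le\mu_n(O)+e^{-a_nM}$ one obtains $\max\bigl(\varliminf_n\tfrac1{a_n}\log\mu_n(O),-M\bigr)\ge-\inf_{O}I$, and $M\to\infty$ gives (P1). For (P1) $\Leftrightarrow$ (H1) only $I(x)<\infty$ is non-trivial. Assuming (P1): for each fixed $r>0$ it forces $\mu_n(B(x,r))>0$ eventually and $\varlimsup_n\bigl(-\tfrac1{a_n}\log\mu_n(B(x,r))\bigr)\le I(x)$, while $\tfrac1{a_n}H(\nu^r_n|\mu_n)=-\tfrac1{a_n}\log\mu_n(B(x,r))$ for $\nu^r_n$ the normalized restriction of $\mu_n$ to $B(x,r)$; a diagonal extraction along $r=r_n\downarrow0$ produces $\nu_n\to\delta_x$ narrowly with $\varlimsup_n\tfrac1{a_n}H(\nu_n|\mu_n)\le I(x)$, that is (H1). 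Conversely, given (H1) and $x\in O$, pick $\nu_n\to\delta_x$ with $\varlimsup_n\tfrac1{a_n}H(\nu_n|\mu_n)\le I(x)+\eps$; since $O$ is open, $\nu_n(O^c)\to0$, and feeding $\psi=-a_nM\mathbf 1_{O^c}$ into the Donsker--Varadhan bound gives $\tfrac1{a_n}\log\bigl(\mu_n(O)+e^{-a_nM}\bigr)\ge-M\nu_n(O^c)-\tfrac1{a_n}H(\nu_n|\mu_n)$, whence $\varliminf_n\tfrac1{a_n}\log\mu_n(O)\ge-I(x)$ on letting $M\to\infty$, $\eps\to0$.

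For (H1) $\Leftrightarrow$ (H2), the implication $\Leftarrow$ is the special case $\nu=\delta_x$. For $\Rightarrow$, note that $\Phi:=\Glimsup_n H^{\mb a}_n$ is lower semicontinuous (a general property of $\Gamma$-limits) and convex (take convex combinations of recovery sequences, using that the affine operations on $\mathcal P(X)$ are narrowly continuous and each $H(\cdot|\mu_n)$ is convex); extending $\Phi$ by $+\infty$ off $\mathcal P(X)$ --- which is closed in the space of finite signed measures with the $\sigma(\cdot,C_\mathrm b(X))$-topology, whose dual is $C_\mathrm b(X)$ --- it becomes a proper lower semicontinuous convex functional, hence the supremum of the continuous affine functionals $\nu\mapsto\nu(f)+c$ it dominates; since each such $a$ satisfies $a(\nu)=\int a(\delta_x)\,\nu(dx)$, one concludes $\Phi(\nu)\le\int\Phi(\delta_x)\,\nu(dx)\le\nu(I)$ by (H1). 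Finally, if $I$ is lower semicontinuous, (P1) $\Rightarrow$ (L) follows by the same ball construction: for $x\in K$ with $I(x)<\infty$, the sequence $K_n:=K\cup\overline{B(x,r_n)}$ with $r_n\downarrow0$ converges to $K$ in $\tau_H$ and has $\mu_n(K_n)\ge\mu_n(B(x,r_n))$, so $\varlimsup_n l^{\mb a}_n(K_n)\le I(x)$ after diagonalization; optimizing over $x\in K$ gives $(\Glimsup_n l^{\mb a}_n)(K)\le l_I(K)$. Conversely, given (L) and $x\in O$ with $I(x)<\infty$, a recovery sequence $K_n\to\{x\}$ for $l^{\mb a}_n$ with $\varlimsup_n l^{\mb a}_n(K_n)\le l_I(\{x\})+\eps=I(x)+\eps$ is eventually contained in $O$, so $-\tfrac1{a_n}\log\mu_n(O)\le l^{\mb a}_n(K_n)$, and one concludes as before.

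The main obstacle is the implication (H1) $\Rightarrow$ (H2): one must establish that $\Glimsup_n H^{\mb a}_n$ is convex and then justify the Jensen-type inequality $\Phi(\nu)\le\int\Phi(\delta_x)\,\nu(dx)$, which hinges on representing a (possibly improper) lower semicontinuous convex functional on $\mathcal P(X)$ as a supremum of continuous affine minorants. The rest is comparatively routine; the only recurring technicality is to organize the diagonal extractions in $r_n\downarrow0$ so that the narrow convergence $\nu_n\to\delta_x$, respectively the Hausdorff convergence $K_n\to K$, is not lost.
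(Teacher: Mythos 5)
Most of your outline is sound, and in places it takes a genuinely different route from the paper: you prove (P1) $\Rightarrow$ (P2) directly from the topological formula $(\Gliminf_n\varphi_n)(x)=\sup_{O\ni x}\varliminf_n\inf_O\varphi_n$ and (H1) $\Rightarrow$ (P1) directly from the Donsker--Varadhan inequality, whereas the paper routes both through the implication (H1) $\Rightarrow$ (P2); your treatment of (H1) $\Rightarrow$ (H2) (convexity of $\Glimsup_n H^{\mathbf a}_n$ via convex combinations of recovery sequences, then the Jensen-type inequality justified by continuous affine minorants in the duality between finite signed measures and $C_{\mathrm b}(X)$) is a legitimate, more explicit version of the Jensen step the paper merely asserts. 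The (P1) $\Rightarrow$ (H1) step with normalized restrictions to balls and a diagonal extraction, and the (L) $\Rightarrow$ (P1) step with singleton recovery sequences, coincide in substance with the paper's arguments.

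There is, however, a genuine gap in your (P1) $\Rightarrow$ (L) step. You set $K_n:=K\cup\overline{B(x,r_n)}$ and use $\mu_n(K_n)\ge\mu_n(B(x,r_n))$, treating $K_n$ as an element of $\mathfrak K(X)$; but in a general Polish space closed balls need not be compact (take $X$ an infinite-dimensional separable Banach space), so $K_n$ is in general not compact, $l^{\mathbf a}_n(K_n)$ is not even defined, and Hausdorff convergence in $\mathfrak K(X)$ does not apply. The missing idea is the inner regularity of each $\mu_n$: for every $n$ pick a compact $C_n\subset B(x,r)$ (or, as in the paper, $C_n\subset K^\eps$) with $\mu_n(C_n)\ge e^{-\eps a_n}\mu_n(B(x,r))$ --- even $\mu_n(C_n)\ge\tfrac12\mu_n(B(x,r))$ suffices, the loss being negligible at speed $(a_n)$ --- and set $K_n:=K\cup C_n$, then diagonalize in $r$ (or $\eps$) exactly as you planned. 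With that replacement the step goes through; as written, it fails outside the case where closed balls of $X$ are compact.
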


\begin{theorem}
\label{t:convldup}
Assume that $I$ is lower semicontinuous. Then the following are equivalent.
\begin{itemize}
\item[(P1)]{$(\mu_n)$ satisfies a LD weak upper
    bound with speed $(a_n)$ and rate $I$.}

\item[(P2)]{For each sequence $(\varphi_n)$ of measurable maps
    $\varphi_n\colon X \to \overline{\bb R}$ bounded from below and such that 
\begin{equation}
\label{e:kkkii}
\sup_{K \subset X\,\text{compact}} \varlimsup_n
    \frac{\mu_n\big(\un {K^c} \exp(-a_n \varphi_n )\big)}{\mu_n\big( \exp(-a_n \varphi_n )\big)} =0
\end{equation}    
the following inequality holds
\begin{equation*}
\varlimsup_n \tfrac 1{a_n} \log \mu_n\big(\exp(-a_n \varphi_n) \big) 
 \le \sup_{x \in X} \big\{ -\big(\Gliminf_n \varphi_n \big)(x)- I(x)\big\}
\end{equation*}
where $-\big(\Gliminf_n \varphi_n \big)(x)- I(x)\colon =-\infty$
whenever $I(x)=+\infty$.  }

\item[(H1)]{For each $x \in X$, $\big(\Gliminf_n
    H^{\mb a}_n\big)(\delta_x) \ge I(x)$.}
    
\item[(H2)]{For each $\nu \in \mathcal{P}(X)$, $\big(\Gliminf_n
    H^{\mb a}_n\big)(\nu) \ge \nu(I)$.}
    
\item[(L)]{ $\Gliminf_n l^{\mb a}_n \ge l_I$.}
\end{itemize}

Assume furthermore that $(\mu_n)$ satisfies the equivalent
conditions of Theorem~\ref{t:tightld}. Then the above statements are also equivalent to
\begin{itemize}
\item[(P3)]{$\mu_n$ satisfies a LD upper
    bound with speed $(a_n)$ and rate $I$.}

\item[(P4)]{For each sequence $(\varphi_n)$ of measurable maps
    $\varphi_n\colon X \to \overline{\bb R}$, bounded from below it holds
\begin{equation*}
\varlimsup_n \tfrac 1{a_n} \log \bb \mu_n \big(\exp(-a_n \varphi_n) \big) 
 \le \sup_{x \in X} \big\{ -\big(\Gliminf_n \varphi_n \big)(x)- I(x)\big\}
\end{equation*}
where $-\big(\Gliminf_n \varphi_n \big)(x)- I(x):=-\infty$ whenever
$I(x)=+\infty$.}
\end{itemize}
\end{theorem}

\subsection{Proofs for section~\ref{s:3}}
\label{ss:3.3}

We start by recalling some basic facts concerning $\Gamma$-convergence theory and relative entropies. The claims in the following three remarks are easy to prove.

\begin{remark}
\label{r:Glim}
  The $\Gamma$-liminf and $\Gamma$-limsup of $(I_n)$ are lower
  semicontinuous functionals, coercive if $(I_n)$ is
  equicoercive.
  
Let $J \colon X \to [0,+\infty]$. Then
\begin{itemize}
\item[(i)]{If for each sequence $x_n\to x$, $\varliminf_n I_n(x_n)\ge J(x)$, then $J \le \Gliminf_n I_n$.
}

\item[(ii)]{If there exists a sequence $x_n\to x$ such that $\varlimsup_n I_n(x_n)\le J(x)$, then $J \ge \Glimsup_n I_n$.
}
\end{itemize}
The $\Gliminf_n I_n$ and $\Glimsup_n I_n$ are respectively the smallest
and the largest lower semicontinuous functionals on $X$ satisfying conditions (i) and (ii) above.

Moreover for each $x\in X$, open set $O\subset X$, compact $K \subset X$ the following holds
\begin{itemize}
\item[(a)]{There exists a sequence $x_n\to x$ such that
\begin{equation*}
\varlimsup_n I_n(x_n)\le \big(\Glimsup_n I_n \big)(x)
\end{equation*}
and
\begin{equation*}
\varlimsup_n \inf_{y\in O} I_n(y)
  \le  \inf_{y\in O} \big(\Gliminf_n I_n\big)(x)
\end{equation*}
}

\item[(b)]{For each sequence $x_n\to x$ 
\begin{equation*}
\varliminf_n I_n(x_n)\ge \big(\Gliminf_n I_n\big)(x)
\end{equation*}
and
\begin{equation*}
\varliminf_n \inf_{y\in K} I_n(y)\le  
 \inf_{y\in K} \big(\Gliminf_n I_n\big)(y)
\end{equation*}
Additionally, if $I_n$ is equicoercive then for each closed set $C\subset X$
\begin{equation*}
\varliminf_n \inf_{y\in C} I_n(y)\le  
 \inf_{y\in C} \big(\Gliminf_n I_n\big)(x)
\end{equation*}
}
\end{itemize}
\end{remark}

Hereafter for $\mu \in \mathcal{P}(X)$ and $A$ a Borel subset of $X$ such that $\mu(A)>0$, $\mu^{A} \in \mathcal{P}(X)$ denotes the probability measure obtained by conditioning $\mu$ on $A$.
\begin{remark}
\label{r:relentr}
If $\mf G \subset \mf F$ then
\begin{equation}
\label{e:H0}
H_{\mf G}(\nu|\mu) \le H_{\mf F}(\nu|\mu)
\end{equation}

If $\mf F$ is the Borel $\sigma$-algebra of $X$, the supremum over
$\varphi$ in \eqref{e:H1} can equivalently run over the test functions
$\varphi \in L_1(X,d\nu)$, or equivalently over $\varphi \in
C_\mathrm{c}(X)$, or equivalently over the set of measurable functions
$\varphi$ taking only a finite number of values. Moreover
\begin{equation}
\label{e:Hdef}
  H(\nu|\mu)=
  \begin{cases}
    \int_X\!\mu(dx)\, \frac{d\nu}{d\mu}(x) \log  \frac{d\nu}{d\mu}(x)
    & \text{if $\nu <<\mu$}
\\
+\infty & \text{otherwise}
  \end{cases}
\end{equation}
In particular, for $\nu(A)>0$, $\tfrac{d\nu^A}{d\mu}= \tfrac{\un {A}}{\nu(A)} \tfrac{d\nu}{d\mu} $, so that
\begin{equation}
  \label{e:Hcond}
\begin{split}
  H(\nu^{A}|\mu) = & 
  -\log \nu(A)+\tfrac{1}{\nu(A)} \int_A  d\nu(x) \,\log \frac{d\nu}{d\mu}(x) 
\\ 
\le &
-\log \nu(A) + \frac{1}{\nu(A)} 
\left(H(\nu|\mu)+(1-\nu(A))\log\left(\tfrac{1-\nu(A)}{1-\mu(A)} \right)\right)
\\
\le & -\log \nu(A) + \frac{1}{\nu(A)} H(\nu|\mu)+ 1-\tfrac{\mu(A)}{\nu(A)}
\end{split}
\end{equation}
where the first inequality follows by taking $\varphi$ constant on $A^c$ in the definition \eqref{e:H1}.

Let $Y$ be also a Polish space, $\lambda \in \mc P(Y)$, $\theta \colon X \to Y$ measurable and $\mf F_\theta$  the associated $\sigma$-algebra. If $\lambda \in \mc P(Y)$ then
\begin{equation}
\label{e:Hproj}
H(\lambda | \mu \circ \theta^{-1}) =\inf_{\nu:\, \nu \circ \theta^{-1}=\lambda} H(\nu|\mu)=H_{\mf F_\theta}(\nu|\mu) \quad \text{for all $\nu \,:\: \nu \circ \theta^{-1}=\lambda$}
\end{equation}
If $H(\lambda |\mu \circ \theta^{-1}) <+\infty$ then the infimum in \eqref{e:Hproj} is attained, namely
\begin{equation}
\label{e:Hproj2}
\begin{split}
& H(\lambda |\mu \circ \theta^{-1})= H(\bar \nu |\mu)
\\
& \bar \nu(dx) = \mu(dx)\, \bb E^{\nu}  \Big(\frac{d\nu}{d\mu} \Big| \mf F_\theta \Big)(x)
 \quad \text{for all $\nu\,:\: \nu \circ \theta^{-1}=\lambda$}
\end{split}
\end{equation}
\end{remark}

\begin{remark}
\label{r:relentr2}
If $\mf G=\sigma((E^i)_{i=0}^N)$ is a
$\sigma$-algebra generated by a finite partition of $X$, then
\begin{equation*}
  H_{\mf G}(\nu|\mu)
 = \sum_{i=0}^N \nu(E^i) \log \frac{\nu(E^i)}{\mu(E^i)}
 \end{equation*}
where we understand $\nu(A) \log \tfrac{\nu(A)}{\mu(A)}=0$
 whenever $\nu(A)=0$ and $\nu(A) \log
 \tfrac{\nu(A)}{\mu(A)}=+\infty$ if $\mu(A)=0$ but
 $\nu(A)>0$. 
 
Moreover taking $\varphi=\log(1+\mu(A))\un {A}$ in \eqref{e:H1} one obtains
\begin{equation}
   \label{e:H4}
   \nu(A) \le 
   \frac{\log 2 + H(\nu|\mu)}{\log(1+ \tfrac{1}{\mu(A)})}
 \end{equation}
whenever $H(\nu|\mu)<+\infty$ and $\mu(A)>0$.
\end{remark}

\begin{remark}
\label{r:E}
Let $\mu,\,\nu\in \mathcal{P}(X)$, and $(K_\ell)_{\ell \in \bb N}$ a sequence of compacts subsets of $X$ such that $\lim_{\ell} \mu(K_\ell)=1$.
Then for each $\delta>0,\,\ell \in \bb N$ there exists a finite
family $(E^i_{\delta,\ell})_{i=1}^{N_{\delta,\ell}}$ of Borel subsets of $X$ such that:
\begin{itemize}
\item[(i)] $\cup_i E^i_{\delta,\ell} \supset K_\ell$ and $E^i_{\delta,\ell} \cap E^{i'}_{\delta,\ell}= \emptyset$ if $i\neq i'$.
\item[(ii)] $\mathrm{diameter}(E^i_{\delta,\ell})\le \delta$, for $i=1,\ldots,\,N_{\delta,\ell}$.
\item[(iii)]  $\mu(\partial E^i_{\delta,\ell}) = \nu(\partial E^i_{\delta,\ell}) =0$, for $i=1,\ldots,\,N_{\delta,\ell}$.
\item[(iv)] Each $E^i_{\delta,\ell}$ has nonempty interior.
\end{itemize}
Set $E^0_{\delta,\ell}=X \setminus \cup_{i \ge 1} E^{i}_{\delta,\ell}$. One may also assume, with no loss of generality
\begin{itemize}
\item[(v)] The partition $(E^i_{\delta,\ell})_{i=0}^{N_{\delta,\ell}}$ is finer than
$(E^i_{\delta',\ell'})_{i=0}^{N_{\delta',\ell'}}$ if $\delta \le
\delta'$ and $\ell \ge \ell'$.
\end{itemize}
Moreover, if $\mf G_{\delta,\ell}$ is the $\sigma$-algebra generated by $(E^i_{\delta,\ell})$
\begin{equation*}
  \lim_{\ell} \lim_{\delta} H_{\mf G_{\delta,\ell}}(\nu|\mu)= H(\nu|\mu)
\end{equation*}
\end{remark}
\begin{proof}
Fix $\delta,\,\ell$ and take a finite cover of $K_\ell$ with open balls $B_{\delta/2}(x_i)$ of radius $\delta/2$ and centered at $x_i \in K_\ell$, $i=1,\ldots,\,N_{\ell,\delta}$. Take $r>0$ such that $r\le \delta/2$ and $r \le \mathrm{distance}(x_i,x_j)$ for all $i\neq j$. By $\sigma$-additivity of $\mu$ and $\nu$, there exists $\delta' \in ]\delta/2,\delta/2+r[$ such that $\mu(\partial B_{\delta'}(x_i))=\nu(\partial B_{\delta'}(x_i))=0$ for all $i$. Then take
\begin{equation*}
\begin{cases}
E^1_{\delta,\ell}=B_{\delta'}(x_1) &
\\
E^{i}_{\delta,\ell}=B_{\delta'}(x_i) \setminus \cup_{j<i} E^{j}_{\delta,\ell}  & \text{for $i>1$.}
\end{cases}
\end{equation*}
It is immediate to check that $(E^i_{\delta,\ell})$ satisfies (i)-(iv); and by a refining procedure one gets the $E^i_{\delta,\ell}$ to satisfy (v) as well. The convergence of the relative entropies is a consequence of (i)-(v).
\end{proof}

Next we turn to the proofs of the statements in section~\ref{ss:3.1} and \ref{ss:3.2}.
\begin{proof}[Proof of Proposition~\ref{p:tight}]
\noindent (H)$\Rightarrow$(P).
$\mu_n$ is in the $0$ sublevel set of $H_n$, and thus $(\mu_n)$ is precompact (and tight) by the definition of equicoercivity.

\smallskip
\noindent (P)$\Rightarrow$(H).
Let $(\nu_n)$ be a sequence in $\mathcal{P}(X)$ such that $\varlimsup_n H(\nu_n|\mu_n)<+\infty$. Since $\mu_n$ is tight, there exists an increasing sequence $(K_\ell)$ of compacts  such that $\lim_\ell \varlimsup_n  \mu_n(K_\ell^c)=0$. Since $H(\nu_n|\mu_n)$ is uniformly bounded, the application of \eqref{e:H4} with $A=K_\ell^c$ yields $\lim_\ell \varlimsup_n \nu_n(K_\ell^c)=0$. Namely $(\nu_n)$ is tight.

\smallskip
\noindent (P)$\Leftrightarrow$(L). It is trivial.
\end{proof}

 \begin{proof}[Proof of Proposition~\ref{p:conv}]
  \noindent  (P1)$\Rightarrow$(P2).
  Assume $\mu_n \to \mu$ and let $(E^i_{\delta,\ell})_{i=0}^{N_{\delta,\ell}}$ be as in Remark~\ref{r:E} with $\nu=\mu$. Let $(\varphi_n)$ be as in the
  statement (P2), and define 
  $\varphi_{n,\delta,\ell},\,\varphi_{\delta,\ell}\colon X \to \bar{\bb
    R}$ by
\begin{eqnarray*}
& & \varphi_{n,\delta,\ell}(x)= \inf_{y \in E^i_{\delta,\ell}}
\varphi_n(y) \qquad \text{if $x\in E^i_{\delta,\ell}$}
\\
& & \varphi_{\delta,\ell}(x)= \varliminf_n \varphi_{n,\delta,\ell}(x)
\end{eqnarray*}
Note that by Remark~\ref{r:E}-(iii) and -(v)
\begin{equation*}
\mu\big(\cup_{\ell,\delta>0} \cup_{i=1}^{N_{\delta,\ell}} \partial E_{\delta,\ell}^i\big) =\lim_{\ell} \lim_{\delta} \sum_{i=0}^{N_{\delta,\ell}} \mu(E^i_{\delta,\ell})=0
\end{equation*}
On the other hand, if $x \not \in \cup_{\ell,\delta>0} \cup_{i=1}^{N_{\delta,\ell}} \partial E_{\delta,\ell}^i$ it is easy to check
 \begin{equation*}
 \lim_\ell \lim_\delta \varphi_{\delta,\ell}(x) = (\Gliminf_n
\varphi_n)(x)
 \end{equation*}
 the limit being monotone increasing by Remark~\ref{r:E}-(v). Thus
$\lim_\ell \lim_\delta \varphi_{\delta,\ell} = (\Gliminf_n \varphi_n)$ $\mu$-a.e., and by monotone convergence
\begin{equation*}
  \begin{split}
       \mu (\Gliminf_n \varphi_n) & =
      \lim_\ell \lim_\delta \mu (\varphi_{\delta,\ell})
      =
      \lim_\ell \lim_\delta \sum_{i=0}^{N_{\delta,\ell}} 
      \big[ \mu(E^i_{\delta,\ell}) \,
      \varliminf_n \inf_{y \in E_{\delta,\ell}^i} 
       \varphi_{n,\delta,\ell}(y) \big]
      \\ & 
      =
      \lim_\ell \lim_\delta \varliminf_n
      \sum_{i=0}^{N_{\delta,\ell}} \big[  \mu_n(E^i_{\delta,\ell}) 
      \inf_{y \in E^i_{\delta,\ell}} 
           \varphi_{n,\delta,\ell}(y) \big]
      \le \varliminf_n \mu_n\big(\varphi_n \big)
    \end{split}
\end{equation*}
where last equality follows from Remark~\ref{r:E}-(iii).

\smallskip
 \noindent (P2)$\Rightarrow$(H).
By (P2), $\mu_n(\varphi) \to \mu(\varphi)$ for each $\varphi \in C_\mathrm{b}(X)$. Let now $\nu \in \mathcal{P}(X)$ and let $(\nu_n)$ be an arbitrary sequence
in $\mathcal{P}(X)$ such that $\nu_n\to \nu$. Then
\begin{equation*}
  \begin{split}
  \varliminf_n  H(\nu_n|\mu_n) & =
\varliminf_n \sup_{\varphi \in C_{\mathrm{b}}(X)} \big\{\nu_n(\varphi)
      -\log \mu_n(e^\varphi) \big\} 
 \ge  \sup_{\varphi \in C_{\mathrm{b}}(X)} \varliminf_n\big\{\nu_n(\varphi)
      -\log \mu_n(e^\varphi) \big\} 
\\ &   =  \sup_{\varphi \in C_{\mathrm{b}}(X)} \big\{\nu(\varphi)
     -\log \mu (e^\varphi) \big\}=H(\nu|\mu)
   \end{split}
\end{equation*}
Namely $\varliminf_n H_n(\nu_n)\ge H(\nu)$, and thus the
$\Gamma$-liminf inequality holds. It is enough to prove the
$\Gamma$-limsup inequality for $\nu$ such that
$H(\nu|\mu)<+\infty$. In particular $\nu$ is absolutely continuous
with respect to $\mu$. Let
$(E^i_{\delta,\ell})_{i=0}^{N_{\delta,\ell}}$ be as in
Remark~\ref{r:E}. Fix $\delta,\,\ell>0$, and for $n$
large enough define the probability $\nu_{n,\delta,\ell} \in \mathcal{P}(X)$ as
\begin{equation*}
\label{e:nundl}
  \nu_{n,\delta,\ell}(A)=\sum_{i=0}^{N_{\delta,\ell}} 
  \nu(E^i_{\delta,\ell})
  \frac{\mu_n(A \cap E^i_{\delta,\ell})
  }{\mu_n(E^i_{\delta,\ell})}
\end{equation*}
$ \nu_{n,\delta,\ell}$ is well defined since
$\nu(E^i_{\delta,\ell})=0$ whenever
$\mu_n(E^i_{\delta,\ell})=0$ for $n$ large enough. Then
\begin{equation*}
  \lim_\ell \lim_\delta \lim_n \nu_{n,\delta,\ell} = \nu
\end{equation*}
On the other hand, by explicit calculation,
$H(\nu_{n,\delta,\ell}|\mu_n)= H_{\mf
  G_{\delta,\ell}}(\nu|\mu_n)$, and recalling that the sets $E^i_{\delta,\sigma}$ are $\mu$- and $\nu$-regular
\begin{equation*}
  \begin{split}
    \varlimsup_n H(\nu_{n,\delta,\ell}|\mu_n) & 
     = \varlimsup_n H_{\mf G_{\delta,\ell}}(\nu|\mu_n) 
     = \varlimsup_n \sum_{i=0}^{N_{\delta,\ell}} \nu(E^i_{\delta,\ell}) 
         \log \frac{\nu(E^i_{\delta,\ell}) }{\mu_n(E^i_{\delta,\ell}) }
\\ &  = \sum_{i=0}^{N_{\delta,\ell}} \nu(E^i_{\delta,\ell}) 
           \log \frac{\nu(E^i_{\delta,\ell}) }{\mu(E^i_{\delta,\ell}) } 
= H_{\mf G_{\delta,\ell}}(\nu|\mu) 
\le H(\nu|\mu)
  \end{split}
\end{equation*}
Thus there exist sequences $(\delta_n)$, $(\ell_n)$ such
that $\nu_n:= \nu_{n,\delta_n,\ell_n} \to \nu$ and $\varlimsup_n
H(\nu_n|\mu_n) \le H(\nu|\mu)$.

\smallskip
 \noindent (H)$\Rightarrow$(P1). $\mu_n$ is the unique minimizer of $H_n$, and $\mu$ is the unique minimizer of
$H$. Since converging sequences of minimizers converge to minimizers of the $\Gamma$-limit, see \cite[Proposition~7.18]{DM}, one is left to show that $(\mu_n)$ is precompact in $\mc P(X)$, namely that 
\begin{equation}
\label{e:muntight}
\sup_{K \in \mc K(X)} \varliminf_n \mu_n(K)=1
\end{equation}
(H) implies that there exists a sequence $(\nu_n)$ converging to $\mu$ such that $\varlimsup_n H(\nu_n|\mu_n) \le H(\mu|\mu)$; so that, in view of the tightness of $(\nu_n)$
\begin{equation}
\label{e:nunrec}
\lim_n  H(\nu_n|\mu_n)=0, \qquad \qquad \sup_{K\in \mc K(X)} \varliminf_n \nu_n(K)=1
\end{equation}
Reversing the inequality in \eqref{e:H4} (with $A=K$), one gets for each $K\in \mc K(X)$
\begin{equation*}
\mu_n(K) \ge \frac 1{2^{\tfrac{1}{\nu_n(K)} }\exp[H(\nu_n|\mu_n)]-1}
\end{equation*}
Taking the liminf in $n$ and the supremum over $K\in \mc K(X)$, one gets \eqref{e:muntight} by \eqref{e:nunrec}.

\smallskip
 \noindent (P1)$\Rightarrow$(L1). Fix $\eps>0$, $K\in \mf K(X)$, and let $K^\eps$ be the open $\eps$-enlargement of $K$ with respect to any compatible metric on $X$. For each $n$, take a compact $K_{n,\eps}$ such that $K\subset K_{n,\eps} \subset K^\eps$ and $\mu_n(K_{n,\eps})\ge \mu_n(K^\eps)-\eps$. Then by (P1)
 \begin{equation*}
\varliminf_{\eps \downarrow 0} \varliminf_n \mu_n(K_{n,\eps}) \ge  \varliminf_{\eps \downarrow 0}  \big(\varliminf_n \mu_n(K^\eps) -\eps\big) \ge \varliminf_{\eps \downarrow 0} \mu(K^\eps) -\eps =\mu(K)
 \end{equation*} 
 Thus there exists $\eps_n \downarrow 0$ such that $K_n:=K_{n,\eps_n} \to K$ in $\mf K(X)$ and $\varlimsup_n l_{a,\mu_n}(K_n) \le l_{a,\mu}(K)$. Namely the $\Gamma$-limsup inequality holds.
 
Fix now $K\in \mf K(X)$ and let $(K_n)$ be a sequence converging to $K$ in $\mf K(X)$. Define $Q_m:=K \bigcup \cup_{n \ge m} K_n$. Then $Q_m$ is compact for all $m$, and by (P1)
 \begin{equation*}
\varlimsup_n \mu_n(K_n) \le \lim_m \varlimsup_n \mu_n(Q_m) \le \lim_m \mu(Q_m) = \mu(K)
 \end{equation*}
 which is the $\Gamma$-liminf inequality for $(l_{a,\mu_n})$.

\smallskip
 \noindent (L2)$\Rightarrow$(P1). Let $\eps>0$ and $O \subset X$ be open. By the regularity of $\mu$ on $X$, there exists a compact $K \subset O$ such that $\mu(K) \ge \mu(O)-\eps$. By the $\Gamma$-limsup inequality for $l_{a,\mu_n}$ and Remark~\ref{r:Glim}-(a), there exists a sequence $(K_n)$ in $\mf K(X)$ such that $\varliminf_n \mu_n(K_n) \ge \mu(K)$. Since $K$ is compact $O \supset K^\delta$ for some $\delta>0$, so that for $n$ large enough $K_n \subset O$. Thus
 \begin{equation*}
 \varliminf_n \mu_n(O) \ge  \varliminf_n \mu_n(K_n) \ge \mu(K) \ge \mu(O)-\eps
 \end{equation*}
and we conclude since $\eps>0$ was arbitrary.

\smallskip
 \noindent (L3)$\Rightarrow$(P1). By sequential compactness of $\Gamma$-convergence \cite[Chap.~10]{DM}, from any subsequence $(\mu_{n'})$ of $(\mu_n)$ one can extract a further subsequence $\mu_{n''}$ such that $l_{a,\mu_{n''}}$ $\Gamma$-converges to $l_{a,\mu}$, thus $\mu_{n''} \to \mu$ by the statement (L2)$\Rightarrow$(P1) proved above. Since $\mc P(X)$ is Polish, the Urysohn property holds, and $\mu_n \to \mu$. 
 
\smallskip
 \noindent (L1)$\Rightarrow$(L2) and (L1)$\Rightarrow$(L3) are trivial.
\end{proof}

\begin{proof}[Proof of Theorem~\ref{t:tightld}]
 \noindent (P) $\Rightarrow$ (H).
 By (P), for each $\ell>0$, there exists a compact $K_\ell \subset X$ such that
  $\mu_n(K_\ell^c) \le e^{-\ell\, a_n}$. By \eqref{e:H4}, for each
  $\nu \in \mathcal{P}(X)$
\begin{equation*}
  \nu(K_\ell^c) 
  \le \frac{\tfrac{1}{a_n} H(\nu|\mu_{n})+\tfrac{\log 2}{a_n} }
         {\tfrac{1}{a_n} \log \big(1+\frac{1}{\mu_{n} (K_\ell^c)} \big)}
  \le \frac{H_n^{\mb a}(\nu) +\tfrac{\log 2}{a_n} }{\ell}          
\end{equation*}
Let $n_0$ be such that $a_n \ge 1$ for $n \ge n_0$. Then for $M>0$
\begin{equation*}
\cup_{n \ge n_0}\big\{\nu \in \mathcal{P}(X)\,:\:H_n^{\mb a}(\nu) \le M \big\}
 \subset \left\{\nu \in
\mathcal{P}(X)\,:\: \forall \ell>0,\,\nu(K_\ell^c) 
\le \frac{M+\log 2}{\ell}\right\}
\end{equation*}
which is a tight set, and thus precompact in $\mathcal{P}(X)$. Since $\cup_{n < n_0}\big\{\nu \in \mathcal{P}(X)\,:\:H_n^{\mb a}(\nu) \le M \big\}$ is precompact, we conclude.

\smallskip
 \noindent (H) $\Rightarrow$ (P).
 Note that, by \eqref{e:Hcond}, for each $\ell>0$ and integer $n_0\ge 1$
\begin{equation*}
\begin{split}
  \mc P_{n_0,\ell} :=&  \cup_{n \ge n_0}
  \big\{ \mu^{K^c}_n,\, \text{$K \subset X$ is
    compact and } \mu_n(K^c) \ge e^{-\ell\,a_n} \big\}
\\
 \subset &  \cup_{n \ge n_0} \{\nu \in \mathcal{P}(X)\,:\:H_n^{\mb a}(\nu) \le
  \ell\}
\end{split}
\end{equation*}
Therefore by (H), for each $\ell>0$ there exists $n_0(\ell)$
such that $\mc P_{n_0(\ell),\ell}$ is precompact in $\mathcal{P}(X)$, and
thus tight. In particular, for each $\ell>0$ there exists a compact
set $K_{\ell} \subset X$ such that $\mu_n^{K^c}(K_{\ell}^c)
\le 1/2$, for each $n \ge n_0(\ell)$ and each compact $K$ such
that $\mu_n(K^c)\ge \exp(-\ell\,a_n)$. But $\mu_n^{K^c}(K^c)=1$ for each compact $K$ with $\mu_n(K^c)>0$. Thus $K_{\ell} \neq K$ for each compact $K$ such that $\mu_n(K^c) \ge \exp(-\ell\,a_n)$ for some $n \ge n_0(\ell)$. Namely $\mu_n(K_{\ell}^c) \le \exp(-\ell\,a_n)$ for each $\ell>0$ and $n \ge n_0(\ell)$.

\smallskip
 \noindent (P) $\Leftrightarrow$ (L). It is trivial.
\end{proof}

\begin{proof}[Proof of Theorem~\ref{t:convldlow}]
\noindent  (P1) $\Rightarrow$ (H1).
For $x \in X$ and $\delta>0$ let $B_\delta(x)$ the open ball of radius $\delta$ centered at $x$. Fix $n$ and define $\nu_{n,\delta} \in \mathcal{P}(X)$ by
\begin{equation*}
\nu_{n,\delta}:=
\begin{cases}
\mu_n^{B_\delta(x)}  & \text{if $\mu_n(B_\delta(x))>0$}
\\
\delta_x & \text{otherwise}
\end{cases}
\end{equation*}
and note $H(\nu_{n,\delta}|\mu_n)= - \log \mu_n(B_\delta(x))$, where we
understand $-\log(0)=+\infty$. By (P1), for each $\delta>0$
\begin{equation*}
  \varlimsup_n H_n^{\mb a}(\nu_{n,\delta})= 
   - \varliminf_n \tfrac{1}{a_n} \log \mu_n(B_\delta(x)) 
  \le \inf_{y \in B_\delta(x)} I(y) \le I(x)
\end{equation*}
On the other hand $\lim_\delta \lim_n \nu_{n,\delta}= \delta_x$ in
$\mathcal{P}(X)$. In particular, by a diagonal argument, there exists a sequence $(\delta_n)$ converging to $0$ (slowly enough)
such that $\lim_n \nu_{n,\delta_n} = \delta_x$ and $\varlimsup_n
H_n^{\mb a}(\nu_{n,\delta_n}) \le I(x)$. (H1) follows by Remark~\ref{r:Glim}-(ii).

\smallskip
\noindent (H1) $\Rightarrow$ (P2).
Let $Y:=\big\{x \in X\,:\: (\Gliminf_n \varphi_n)(x)>-\infty \big\}$. By the definition of the
$\Gamma$-liminf, for each $x \in Y$ there exist $\delta(x)>0$ and
$n_0(x)\in \bb N$ such that
\begin{equation}
\label{e:glinfbelow}
\inf_{y \in B_{\delta(x)}} \inf_{n\ge n_0(x)} \varphi_n(y)>
-\infty
\end{equation}
For $x \in Y$, let $(\nu_{n,x})$ be a sequence converging to
$\delta_x$ in $\mathcal{P}(X)$ and such that $\varlimsup H_n^{\mb a}(\nu_{n,x})
\le I(x)$. Such a sequence exists by (H1). By
\eqref{e:Hcond}, it is easily seen that $(\nu_{n,x})$ can be assumed
to be concentrated on $B_{\delta(x)}(x)$. By \eqref{e:H1}
\begin{equation}
\label{e:relentrineq5}
\log \mu_n(e^\varphi) \ge -H(\nu_{n,x}|\mu_n) 
+ \nu_{n,x}(\varphi)
\end{equation}
for each measurable $\varphi\colon X\to [-\infty,+\infty]$, provided
we read the right hand side as $-\infty$ whenever
$H(\nu_{n,x}|\mu_n)=+\infty$ or
$\nu_{n,x}(\varphi^-)=+\infty$. Evaluating \eqref{e:relentrineq5} for
$\varphi=a_n \varphi_n$, taking the liminf in $n$ and next optimizing
on $x \in Y$
\begin{equation*}
\varliminf_n \tfrac{1}{a_n}\log \bb \mu_n\big(\exp(a_n \varphi_n)\big)
 \ge \sup_{x \in Y} \Big\{- \varlimsup_n H_n^{\mb a}(\nu_{n,x}) 
 + \varliminf_n \nu_{n,x}(\varphi_n)  \Big\}
\end{equation*}
Since $\nu_{n,x}$ is concentrated on $B_{\delta(x)}(x)$, and by
\eqref{e:glinfbelow} $\varphi_n$ is bounded from below on
$B_{\delta(x)}(x)$ for $n\ge n_0(x)$, (H1) and Proposition~\ref{p:conv} yield
\begin{equation*}
\begin{split}
\varliminf_n \tfrac{1}{a_n}\log \bb \mu_n\big(\exp(a_n \varphi_n)\big) 
& \ge \sup_{x \in Y} \big\{- I(x) 
 + (\Gliminf_n \varphi_n )(x) \big\}
\\
& =
\sup_{x \in X} \big\{- I(x) 
 + (\Gliminf_n \varphi_n )(x) \big\}
\end{split}
\end{equation*}

\smallskip
\noindent
(P2) $\Rightarrow$ (P1). Fix an open set $O \subset X$ and $M>0$. Then $\varphi_n \equiv M \un {O}$ is lower semicontinuous, and thus coincides with its $\Gamma$-limit. It follows
\begin{equation}
\label{e:ineq6}
\begin{split}
  \tfrac{1}{a_n} \log \mu_n(\exp(a_n\,\varphi_n)) & =
  \tfrac{1}{a_n} \log \left(1+\mu_n(O)\exp(a_n\,M))\right)
\\
 &\le\tfrac{\log 2}{a_n}+ 
 \max(0,M+\tfrac{1}{a_n} \log (\mu_n(O))
\end{split}
\end{equation}
By (P2) applied to such a sequence $\varphi_n$, one gathers taking the limit in \eqref{e:ineq6}
\begin{equation*}
\max(-M,\varliminf_n \tfrac{1}{a_n} \log (\mu_n(O))
\ge -M+ \sup_{x \in X} \big(M \un {O}-I(x) \big) 
 \ge -\inf_{x \in O} I(x).
\end{equation*}
This implies (P1) when taking $M\to \infty$.

\smallskip
\noindent (H2) $\Rightarrow$ (H1). Take $\nu=\delta_x$.

\smallskip
\noindent (H1) $\Rightarrow$ (H2).
Since $H_n^{\mb a}$ is a convex functional, $\Glimsup_n H_n^{\mb a}$ is also convex. For an arbitrary $\nu \in \mathcal{P}(X)$, by Jensen inequality and (H1)
\begin{equation*}
\begin{split}
  (\Glimsup_n H_n^{\mb a})(\nu) & = 
 (\Glimsup_n  H_n^{\mb a})\Big(\int_{\mathcal{P}(X)} \nu(dx)\,\delta_x \Big)
\\
  & \le \int_{\mathcal{P}(X)}\! \nu(dx)\, (\Glimsup_n H_n^{\mb a})(\delta_x) \le 
\int_{\mathcal{P}(X)} \!\nu(dx)\, I(x)
\end{split}
\end{equation*}

\smallskip
\noindent (P1) $\Rightarrow$ (L). Fix $\eps>0$, $K\in \mf K(X)$, and let $K^\eps$ be the open $\eps$-enlargement of $K$ with respect any fixed compatible metric on $X$. Then, by the regularity of $\mu_n$ on $X$, for each $n$ there exists $K_{n,\eps} \subset K^\eps$ compact such that $\mu_n(K_{n,\eps}) \ge  \exp(-\eps\,a_n)\,\mu_n(K^\eps)$. By (P1)
 \begin{equation*}
\varlimsup_{\eps \downarrow 0} \varlimsup_n l^{\mb a}_n(K_{n,\eps}) \le  \varlimsup_{\eps \downarrow 0}  \varlimsup_n -
\tfrac{1}{a_n} \log \mu_n(K^\eps) -\eps \le \varlimsup_{\eps \downarrow 0} \inf_{x\in K^\eps} I(x)-\eps =\inf_{x\in K}I(x)
 \end{equation*} 
 Thus there exists $\eps_n \downarrow 0$ such that $K_n:=K_{n,\eps_n} \cup K$ converges to $K$ in $\mf K(X)$ and $\lim_n l^{\mb a}_n(K_n) \le l_I(K)$. Namely the $\Gamma$-limsup inequality holds by Remark~\ref{r:Glim}-(ii).

\smallskip
\noindent (L) $\Rightarrow$ (P1). Let $\eps>0$ and $O \subset X$ open. Since $I$ is lower semicontinuous, there exists $K \subset O$ compact such that $\inf_{x\in K} I(x) \le \inf_{x \in O}I(x)+\eps$. By (L) there exists a sequence $(K_n)$ converging to $K$ in $\mf K(X)$ such that
$\varlimsup_n  -\tfrac{1}{a_n} \log \mu_n(K_n) \le \inf_{x\in K} I(x)$. Since for $n$ large enough $K_n \subset O$
\begin{equation*}
\varlimsup_n  -\tfrac{1}{a_n} \log \mu_n(O) \le 
\varlimsup_n  -\tfrac{1}{a_n} \log \mu_n(K_n) \le \inf_{x\in K} I(x)
\le \inf_{x\in K} I(x)+\eps
\end{equation*}
and (P1) follows since $\eps>0$ is arbitrary.
\end{proof}

\begin{proof}[Proof of Theorem~\ref{t:convldup}]
\noindent (P1) $\Rightarrow$ (H1). Let $x \in X$ and $(\nu_n) \subset \mathcal{P}(X)$ be such that $\lim_n \nu_n =\delta_x$ in $\mathcal{P}(X)$.  In view of Remark~\ref{r:Glim}-(i), it is enough to show $\varliminf_n H_n^{\mb a}(\nu_n)\ge I(x)$. Fix $\eps>0$; since $(\nu_n)$ is tight, there exists $K\subset X$ compact such that $\nu_n(K)\ge 1-\eps$ for all $n$. By  \eqref{e:H4}, for each Borel set $A \subset X$
\begin{equation*}
H^{\mb a}_n(\nu_n) \ge \frac{\nu_n(A) }{a_n}
         \log\Big(1+\frac{1}{\bb \mu_n(A)} \Big) -\frac{\log 2}{a_n}
         \ge -\frac{\nu_n(A) }{a_n}
         \log \mu_n(A) -\frac{\log 2}{a_n}
\end{equation*}
Take now $A=K\cap \upbar{B}_\eps(x)$, where $\upbar{B}_\eps(x)$ is the closed ball of radius $\eps$ centered at $x$. Note that $A$ is compact and $\varliminf_n \nu_n(A)\ge 1-\eps$, thus by (P1)
\begin{equation*}
  \varliminf_n H_n^{\mb a}(\nu_n) 
 \ge - (1-\eps)\varlimsup_n \tfrac{1}{a_n}\log \mu_n(A) 
     \ge (1-\eps) \inf_{y \in A} I(y) \ge (1-\eps) \inf_{y\in \upbar B_\eps(x)}I(y)
\end{equation*}
Since $\eps>0$ was arbitrary, one can take the limit
$\eps \downarrow 0$ in the above formula, and since $I$ is lower semicontinuous the right hand side in the above formula converges to $I(x)$.

\smallskip
\noindent (H2) $\Rightarrow$ (H1). Take $\nu=\delta_x$.

\smallskip
\noindent (H1) $\Rightarrow$ (H2). 
 Assume (H1). Let $\nu \in \mathcal{P}(X)$ and $(\nu_n)$ be a sequence
converging to $\nu$ in $\mathcal{P}(X)$. One needs to show $\varliminf_n
H_n^{\mb a}(\nu_n) \ge \nu(I)$.

For $\delta,\ell>0$ let
$(E^i_{\delta,\ell})_{i=0}^{N_{\delta,\ell}}$ be as in
Remark~\ref{r:E} (with $\mu=\nu$).
For $i\in \{0,\ldots,N_{\delta,\ell}\}$ such that
$\nu_n(E_{\delta,\ell}^i)>0$ define the probability measures
$\nu^i_{n,\delta,\ell}:=\nu_n^{E_{\delta,\ell}^i} \in \mathcal{P}(X)$. Then
by \eqref{e:Hdef}, for each $n,\,\ell>0$
\begin{equation}
\label{e:Hineq4}
\begin{split}
H(\nu_n|\mu_n) & = 
\sum_{i=0}^{N_{\delta,\ell}} \nu_n(E_{\delta,\ell}^i) 
                  H(\nu^i_{n,\delta,\ell}|\mu_n)
+ \nu_n(E_{\delta,\ell}^i) \log  \nu_n(E_{\delta,\ell}^i)
\\
& \ge \sum_{i=0}^{N_{\delta,\ell}}  \nu_n(E_{\delta,\ell}^i) 
      H(\nu_{n,\delta,\ell}^i|\mu_n) 
-\log N_{\delta,\ell}
\end{split}
\end{equation}
where the terms in the above sums are understood to vanish for all $i$
such that $\nu_n(E_{n,\ell}^i)=0$. Dividing \eqref{e:Hineq4} by $a_n$,
taking the liminf and recalling that the sets
$E_{\delta,\ell}^i$ are $\nu$-regular
\begin{equation*}
\begin{split}
\varliminf_n H_n^{\mb a}(\nu_n) & \ge 
\sum_{i=0}^{N_{\delta,\ell}} \varliminf_n \nu_n(E_{\delta,\ell}^i)
                  H_n^{\mb a}(\nu^i_{n,\delta,\ell})
\\
& = 
\sum_{i=0}^{N_{n,\ell}} \nu(E_{\delta,\ell}^i) 
               \varliminf_n  H_n^{\mb a}(\nu^{i}_{n,\delta,\ell})
=\int I_{\delta,\ell}(x) d\nu(x)
\end{split}
\end{equation*}
where $I_{\delta,\ell}$ is defined by
\begin{equation*}
  I_{\delta,\ell}(x):= \varliminf_n H_n^{\mb a}(\nu^{i}_{n,\delta,\ell})
\qquad \text{if $x \in E^i_{\delta,\ell}$}
\end{equation*}
Note that $I_{\delta,\ell}$ is monotone both in $\delta$ and $\ell$, the partitions $\{E_{\delta,\ell}^i\}$ are
increasing as $\delta\downarrow 0$ and $\ell \uparrow +\infty$, see \eqref{e:H1}. By
monotone convergence
\begin{equation*}
  \varliminf_n H_n^{\mb a}(\nu_n) \ge \int  \big(\lim_\ell \lim_\delta I_{\delta,\ell}(x)\big) d\nu(x)  
  \end{equation*}
However, since $\lim_\ell \lim_\delta \lim_n \nu_n=\delta_x$, (H1) implies $\lim_{\ell} \lim_\delta I_{\delta,\ell}(x)\ge I(x)$ pointwise by Remark~\ref{r:Glim}-(b), thus the conclusion.

\smallskip
\noindent (H2) $\Rightarrow$ (P2). Consider the sequence $(\nu_n)$ in $\mathcal{P}(X)$ defined as
\begin{equation*}
\nu_n(dx):= \frac{\exp(-a_n\, \varphi_n(x))}
         {\mu_n\big(\exp(-a_n\,\varphi_n)\big)} \mu_n(dx)
\end{equation*}
 By \eqref{e:Hdef}
\begin{equation*}
  \tfrac{1}{a_n} \log \mu_n\big(\exp(-a_n\, \varphi_n)\big)
 = -\nu_n (\varphi_n)- H_n^{\mb a}(\nu_n)
\end{equation*}

By \eqref{e:kkkii}, $(\nu_n)$ is tight and thus precompact in $\mathcal{P}(X)$. Let $\nu$ be an arbitrary limit point of $(\nu_n)$. Taking the
limsup in $n$, using Proposition~\ref{p:conv} and (H2)
\begin{equation*}
\begin{split}
&
\varlimsup_n \tfrac{1}{a_n} \log \mu_n\big(\exp(-a_n\, \varphi_n)\big)
 \le -\varliminf_n \nu_n\big(\varphi_n \big)
- \varliminf_n H_n^{\mb a}(\nu_n)
\\
& \qquad \qquad  \le  - \nu(\Gliminf_n \varphi_n) - \nu(I)
\le \sup_{x \in X} \big\{-(\Gliminf_n \varphi_n)(x) - I(x) \big\}
\end{split}
\end{equation*}

\smallskip
\noindent (P2) $\Rightarrow$ (P1).
Let $K$ be a compact in $X$, and for $M>0$ let $\varphi_n \equiv \varphi=
M \un {K^c}$. $(\varphi_n)$ satisfies \eqref{e:kkkii}. Moreover  $\varphi=\Glim_n \varphi_n$ since it is lower semicontinuous. Therefore assuming (P2)
\begin{equation*}
  \varlimsup_n  \tfrac{1}{a_n} \log \mu_n(K) \le 
  \varlimsup_n \tfrac{1}{a_n} \log \mu_n\big(\exp(-a_n\,M\, \un {K^c})\big)
  \le \sup_{x \in X} \{-M\, \un {K^c}(x)-I(x) \}
\end{equation*}
Letting $M \to +\infty$, (P1) follows.

\smallskip
\noindent (P1) $\Rightarrow$ (L). Fix $K\in \mf K(X)$ and let $(K_n)$ be a sequence converging to $K$ in $\mf K(X)$. Define $Q_m:=K \bigcup \cup_{n \ge m} K_n$. Then $Q_m$ is compact for all $m$, and by (P1)
 \begin{equation*}
\varliminf_n -\tfrac{1}{a_n} \log \mu_n(K_n) \ge \lim_m \varliminf_n 
-\tfrac{1}{a_n} \log \mu_n(Q_m)
 \ge \lim_m \inf_{x \in Q_m} I(x) = \inf_{x\in K} I(x)
 \end{equation*}
 where we used the lower semicontinuity of $I$ in the last inequality.
 
\smallskip
\noindent (L) $\Rightarrow$ (P1). The weak upper bound is nothing but the $\Gamma$-liminf inequality for $l^{\mb a}_n$ along a constant sequence $K_n\equiv K$.

The implications (P3) $\Rightarrow$ (P1), (P4)
  $\Rightarrow$ (P2), and \{Theorem~\ref{t:tightld}-(P), (P1)\} $\Rightarrow$ (P3)
are trivial. On the other hand the
implication \{Theorem~\ref{t:tightld}-(P),\,(P2)\} $\Rightarrow$ (P4) follows from a standard cut-off argument.
\end{proof}

\section{Applications to Large Deviations}
\label{s:4}
In this section a few consequences of the results of section~\ref{s:3} are discussed.

The following proposition gives an explicit representation of the optimal upper and low bound rate functions, see also \cite[Chapter~4.1]{DZ}, which will come useful in the following.
\begin{proposition}[Existence of Large Deviations]
\label{p:exrepr}
There exist $\upbar{I}^{\mb a}$ and $\downbar{I}^{\mb a}$ which are respectively the minimal and maximal lower semicontinuous functionals for which the weak lower bound and the upper bound
 hold respectively. A weak LDP holds for
$(\mu_n)$ with speed $(a_n)$ iff $\upbar{I}^{\mb a}=\downbar{I}^{\mb a}$.
The following representations of $\upbar{I}^{\mb a}$ and $\downbar{I}^{\mb a}$ hold
\begin{equation}
\label{e:optlow}
\upbar{I}^{\mb a}(x)
  =(\Glimsup_n H^{\mb a}_n)(\delta_x)
  =\lim_{\delta \downarrow 0} \varlimsup_n \tfrac{-1}{a_n} \log \mu_n(B_\delta(x))
  = \sup_{(V_n)} \,(\Glimsup_n V_n)(x)
\end{equation}

\begin{equation}
\label{e:optup}
\downbar{I}^{\mb a}(x)
 =(\Gliminf_n H^{\mb a}_n)(\delta_x)
 =\lim_{\delta \downarrow 0} \varliminf_n \tfrac{-1}{a_n} \log \mu_n(B_\delta(x))
 = \sup_{(V_n)}\, (\Gliminf_n V_n)(x)
\end{equation}
where the supremums are carried over all the sequences $(V_n)$ such
that $V_n\colon X \to \bb [-\infty,+\infty]$ is measurable (or
equivalently continuous and bounded) and $\mu_n (e^{a_n V_n}) \le 1$
(or equivalently $\mu_n (e^{a_n V_n}) = 1$ or equivalently $\varlimsup_n a_n^{-1} \log \mu_n(e^{a_n V_n})\le 0$).
\end{proposition}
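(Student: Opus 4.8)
The plan is to \emph{define} the two functionals by the leftmost expressions in \eqref{e:optlow}--\eqref{e:optup}, $\upbar{I}^{\mb a}(x):=(\Glimsup_n H^{\mb a}_n)(\delta_x)$ and $\downbar{I}^{\mb a}(x):=(\Gliminf_n H^{\mb a}_n)(\delta_x)$, and then to verify the asserted properties. Since $x\mapsto\delta_x$ is continuous from $X$ into $\mc P(X)$ and, by Remark~\ref{r:Glim}, $\Glimsup_n H^{\mb a}_n$ and $\Gliminf_n H^{\mb a}_n$ are lower semicontinuous on $\mc P(X)$, both $\upbar{I}^{\mb a}$ and $\downbar{I}^{\mb a}$ are lower semicontinuous, and $\downbar{I}^{\mb a}\le\upbar{I}^{\mb a}$ since a $\Gamma$-liminf never exceeds the corresponding $\Gamma$-limsup. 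Extremality and the characterization of the weak LDP then just read off Section~\ref{s:3}: by Theorem~\ref{t:convldlow}, (P1)$\Leftrightarrow$(H1), a lower semicontinuous $I$ is an admissible rate for the weak lower bound iff $I\ge\upbar{I}^{\mb a}$ pointwise, so $\upbar{I}^{\mb a}$ — itself lower semicontinuous — is the minimal such; symmetrically, Theorem~\ref{t:convldup}, (P1)$\Leftrightarrow$(H1) (applicable since $\downbar{I}^{\mb a}$ is lower semicontinuous), gives that a lower semicontinuous $I$ is an admissible rate for the weak upper bound iff $I\le\downbar{I}^{\mb a}$, so $\downbar{I}^{\mb a}$ is the maximal such. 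Hence, if a weak LDP holds with rate $I$ then $\upbar{I}^{\mb a}\le I\le\downbar{I}^{\mb a}\le\upbar{I}^{\mb a}$, whence $\upbar{I}^{\mb a}=I=\downbar{I}^{\mb a}$; conversely, if $\upbar{I}^{\mb a}=\downbar{I}^{\mb a}$ this common lower semicontinuous functional is a rate for both bounds, i.e.\ a weak LDP holds.

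For the middle identities I would unwind $\upbar{I}^{\mb a}(x)=\inf\{\varlimsup_n H^{\mb a}_n(\nu_n):\nu_n\to\delta_x\}$ just as in the proof of Theorem~\ref{t:convldlow}, (P1)$\Rightarrow$(H1): the conditioned measures $\nu_n=\mu_n^{B_\delta(x)}$ satisfy $H^{\mb a}_n(\nu_n)=-\tfrac1{a_n}\log\mu_n(B_\delta(x))$, so diagonalizing over a slowly vanishing $\delta_n\downarrow0$ gives ``$\le$'' in $\upbar{I}^{\mb a}(x)=\lim_{\delta\downarrow0}\varlimsup_n\big(-\tfrac1{a_n}\log\mu_n(B_\delta(x))\big)$, while the reverse inequality follows because any $\nu_n\to\delta_x$ has $\varliminf_n\nu_n(B_\delta(x))\ge1$ by narrow convergence, so \eqref{e:H4} with $A=B_\delta(x)$ gives $\varlimsup_n H^{\mb a}_n(\nu_n)\ge\varlimsup_n\big(-\tfrac1{a_n}\log\mu_n(B_\delta(x))\big)$ for all $\delta>0$. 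The same computation with $\varliminf$ and $\varlimsup$ swapped gives the corresponding identity in \eqref{e:optup}.

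The variational representation over $(V_n)$ carries the real work. Fix $x$. If $\mu_n(e^{a_n V_n})\le1$, integrating this over $B_\delta(x)$ gives $\mu_n(B_\delta(x))\,e^{a_n\inf_{B_\delta(x)}V_n}\le1$, i.e.\ $\inf_{B_\delta(x)}V_n\le-\tfrac1{a_n}\log\mu_n(B_\delta(x))$; with the neighbourhood formulas $(\Glimsup_n V_n)(x)=\lim_{\delta\downarrow0}\varlimsup_n\inf_{B_\delta(x)}V_n$, $(\Gliminf_n V_n)(x)=\lim_{\delta\downarrow0}\varliminf_n\inf_{B_\delta(x)}V_n$ (see Remark~\ref{r:Glim}, \cite{DM}) and the middle identities above this yields $(\Glimsup_n V_n)(x)\le\upbar{I}^{\mb a}(x)$ and $(\Gliminf_n V_n)(x)\le\downbar{I}^{\mb a}(x)$. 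For the reverse inequalities I would test against $V_n^\delta:=-\tfrac1{a_n}\log\mu_n(B_\delta(x))\,\un{B_\delta(x)}-\infty\,\un{X\setminus B_\delta(x)}$ (with $-\log0=+\infty$, and using that an integral over a $\mu_n$-null set vanishes, so $\mu_n(e^{a_n V_n^\delta})=1$ in every case): since every sequence tending to $x$ eventually enters the open ball $B_\delta(x)$, one gets $(\Glimsup_n V_n^\delta)(x)=\varlimsup_n\big(-\tfrac1{a_n}\log\mu_n(B_\delta(x))\big)$ and $(\Gliminf_n V_n^\delta)(x)=\varliminf_n\big(-\tfrac1{a_n}\log\mu_n(B_\delta(x))\big)$, and the supremum over $\delta>0$ recovers $\upbar{I}^{\mb a}(x)$, $\downbar{I}^{\mb a}(x)$ by the middle identities. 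The three normalizations yield the same supremum because the classes are nested, $\{\mu_n(e^{a_n V_n})=1\}\subset\{\mu_n(e^{a_n V_n})\le1\}\subset\{\varlimsup_n\tfrac1{a_n}\log\mu_n(e^{a_n V_n})\le0\}$, and the reverse comparisons come from shifting $V_n$ by a spatially constant amount: subtracting $\tfrac1{a_n}(\log\mu_n(e^{a_n V_n}))^+$ (which tends to $0$) enters the middle class, adding the nonnegative $-\tfrac1{a_n}\log\mu_n(e^{a_n V_n})$ enters the left one, and neither operation lowers any $\Gamma$-limit. Finally, restricting to continuous bounded $V_n$ changes nothing, since $V_n^\delta$ may be replaced by $\big(-\tfrac1{a_n}\log\mu_n(B_\delta(x))\big)\phi_\delta-M(1-\phi_\delta)$ with $\phi_\delta\in C_\mathrm{b}(X)$, $\un{\upbar{B}_{\delta/2}(x)}\le\phi_\delta\le\un{B_\delta(x)}$, renormalized by the factor $1+e^{-a_n M}\to1$ (and, where $\upbar{I}^{\mb a}(x)=+\infty$, by large constants supported on a shrinking neighbourhood of $x$).

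\textbf{The main obstacle} is this last step. The inequality $(\Glimsup_n V_n)(x)\le\upbar{I}^{\mb a}(x)$ really needs the neighbourhood characterization of $\Glimsup$, not just its defining sequences, and the reverse inequality needs the competitors $V_n^\delta$ to be at once admissible, concentrated at $x$ for the relevant topology, and — for the last clause — continuous and bounded, which is what makes the mollification and renormalization necessary. The only further points to check are the handling of these $[-\infty,+\infty]$-valued test functions (for which \eqref{e:H1} and the elementary exponential estimates are reached by monotone truncation) and the conventions where $\mu_n(B_\delta(x))=0$. The extremality, LDP and ``ball'' assertions, by contrast, amount to little more than a restatement of Theorems~\ref{t:convldlow}--\ref{t:convldup} and of a computation already contained in their proofs.
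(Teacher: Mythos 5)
Your proposal is correct and follows essentially the same route as the paper: you define the optimal rates as $(\Glimsup_n H^{\mb a}_n)(\delta_x)$ and $(\Gliminf_n H^{\mb a}_n)(\delta_x)$ and identify them through the equivalences (P1)$\Leftrightarrow$(H1) of Theorems~\ref{t:convldlow}--\ref{t:convldup}, obtain the ball formula from the conditioned measures $\mu_n^{B_\delta(x)}$ as recovery sequences (together with \eqref{e:H4} for the lower estimate), and realize the variational formula with exactly the test sequences $V_n^\delta$ that the paper exhibits, your extra work on the normalization classes and the continuous-bounded reduction merely filling in what the paper declares to ``follow in the same fashion''. Only trivial remarks: when $\mu_n(B_\delta(x))=0$ one gets $\mu_n(e^{a_nV_n^\delta})=0\le 1$ rather than $=1$, and you have implicitly (and correctly) restored the minus sign in the middle expressions of \eqref{e:optlow}--\eqref{e:optup} by working with $-\tfrac1{a_n}\log\mu_n(B_\delta(x))$.
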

\begin{proof}
The existence of the optimal rate functions $\upbar I^{\mb a}$ and $\downbar I^{\mb a}$, and the first representation formula above follows from the equivalences (P1) $\Leftrightarrow$ (H1) in Theorems~\ref{t:convldlow}-\ref{t:convldup}. By \eqref{e:Hcond}, it is easy to see that, given $x\in X$, the sequence $\mu_n^{B_\delta(x)}$ is an optimal recovery sequence for $\delta_x$ in the $\Gamma$-limit of $H^{\mb a}_n$, provided $\delta \downarrow 0$ after $n\to +\infty$. The second equalities in \eqref{e:optlow}-\eqref{e:optup} then follow again by (P1) $\Leftrightarrow$ (H1) in Theorem~\ref{t:convldlow}-\ref{t:convldup}. The third equalities in \eqref{e:optlow}-\eqref{e:optup} follow in the same fashion, if one remarks that the supremum in the rightest hand side is attained on the family of sequences $(V_n)$ of the form
\begin{equation*}
V_n(y):=
\begin{cases}
-\tfrac{1}{a_n} \log \mu_n(B_\delta(x)) & \text{if $y\in B_\delta(x)$}
\\
-\infty & \text{if $y \not \in B_\delta(x)$}
\end{cases}
\end{equation*}
as $\delta$ runs in $[0,1[$.
\end{proof}

The following corollaries follow easily from Proposition~\ref{p:exrepr}.
\begin{corollary}[Improving the bounds]
\label{c:bounds}
Let $\mf A$ be a set of indexes.

  Assume that for each $\alpha \in \mf A$, $\mu_n$ satisfies a LD lower bound with speed $(a_n)$ and rate $\upbar{I}_\alpha$. Then $(\mu_n)$ satisfies a LD lower bound with speed $(a_n)$ and rate equal to the lower semicontinuous envelope of $x \mapsto \inf_{\alpha \in \mf A} \upbar{I}_\alpha(x)$.

  Assume that for each $\alpha \in \mf A$, $\mu_n$ satisfies a weak
  LD upper bound with speed $(a_n)$ and lower semicontinuous rate
  $\downbar{I}_\alpha$. Then $(\mu_n)$ satisfies a weak LD upper bound with speed $(a_n)$ and rate $x  \mapsto \sup_{\alpha \in \mf A} \downbar{I}_\alpha(x)$.
\end{corollary}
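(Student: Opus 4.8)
The plan is to reduce both parts to Proposition~\ref{p:exrepr}, which provides the minimal lower semicontinuous rate $\upbar{I}^{\mb a}$ for which the LD lower bound holds and the maximal lower semicontinuous rate $\downbar{I}^{\mb a}$ for which the weak LD upper bound holds. (Equivalently, one may invoke the equivalences (P1)$\Leftrightarrow$(H1) of Theorems~\ref{t:convldlow} and~\ref{t:convldup}, together with the lower semicontinuity of $x\mapsto(\Glimsup_n H^{\mb a}_n)(\delta_x)$ and $x\mapsto(\Gliminf_n H^{\mb a}_n)(\delta_x)$, which follows since $x\mapsto\delta_x$ is continuous.) The second ingredient is the trivial monotonicity of the two bounds in the rate: if the LD lower bound holds with rate $I$ and $I\le I'$, then it holds with rate $I'$, since $\inf_O I\le\inf_O I'$ for every open $O$; dually, if the weak LD upper bound holds with rate $I$ and $I'\le I$, then it holds with rate $I'$, since $\inf_K I'\le\inf_K I$ for every compact $K$.

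For the first statement: by hypothesis each $\upbar{I}_\alpha$ is a lower semicontinuous rate for which the LD lower bound holds, so by minimality $\upbar{I}^{\mb a}\le\upbar{I}_\alpha$ for every $\alpha\in\mf A$, whence $\upbar{I}^{\mb a}\le J:=\inf_{\alpha\in\mf A}\upbar{I}_\alpha$. Being lower semicontinuous and a minorant of $J$, the rate $\upbar{I}^{\mb a}$ lies below the lower semicontinuous envelope $\widehat{J}$ of $J$, i.e.\ the largest lower semicontinuous minorant of $J$. Since the LD lower bound holds with rate $\upbar{I}^{\mb a}$, the monotonicity recalled above gives the LD lower bound with rate $\widehat{J}$, which is the claim.

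For the second statement: by hypothesis each $\downbar{I}_\alpha$ is a lower semicontinuous rate for which the weak LD upper bound holds, so by maximality $\downbar{I}_\alpha\le\downbar{I}^{\mb a}$ for every $\alpha\in\mf A$, whence $L:=\sup_{\alpha\in\mf A}\downbar{I}_\alpha\le\downbar{I}^{\mb a}$. A supremum of lower semicontinuous functions is lower semicontinuous, so $L$ is an admissible rate, and since the weak LD upper bound holds with rate $\downbar{I}^{\mb a}\ge L$, the monotonicity recalled above gives the weak LD upper bound with rate $L$, which is the claim.

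The one point worth emphasizing — and the reason to route the argument through $\upbar{I}^{\mb a}$ and $\downbar{I}^{\mb a}$ rather than combining the hypotheses at the level of compact sets — is that combining the weak upper bound inequalities over $\alpha\in\mf A$ yields only $\varlimsup_n\tfrac{1}{a_n}\log\mu_n(K)\le-\sup_{\alpha\in\mf A}\inf_{x\in K}\downbar{I}_\alpha(x)$, and by the minimax inequality $\sup_{\alpha}\inf_{x\in K}\downbar{I}_\alpha(x)$ can be strictly smaller than the desired $\inf_{x\in K}\sup_{\alpha}\downbar{I}_\alpha(x)$; passing to the pointwise optimal rates of Proposition~\ref{p:exrepr} converts both bounds into pointwise inequalities in $x$, which combine under $\inf$ and $\sup$ without loss. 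No step presents a genuine difficulty.
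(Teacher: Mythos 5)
Your proof is correct and follows the paper's intended route: the paper offers no separate argument for this corollary, stating only that it ``follows easily from Proposition~\ref{p:exrepr}'', and your use of the minimality of $\upbar{I}^{\mb a}$ among lower-bound rates, the maximality of $\downbar{I}^{\mb a}$ among weak-upper-bound rates (via the (P1)$\Leftrightarrow$(H1) equivalences), and the monotonicity of the bounds in the rate is precisely that easy deduction. The closing remark on why a direct combination over compact sets fails (the $\sup\inf$ versus $\inf\sup$ gap) is a sensible justification for routing through the optimal pointwise rates.
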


\begin{corollary}[Large Deviations for double indexed sequences]
  Let $(\mu_{n,m})_{n,m}$ be a double-indexed sequence, directed by
  $(n',m')\ge (n,m)$ if $n' > n$ or $n'=n$ and $m'\ge m$.  For
  each fixed $m$ let $\upbar{I}^{\mb a}_m$ and $\downbar{I}^{\mb a}_m$ be the
  optimal lower and weak upper bound rate functionals for
  $(\mu_{n,m})_{n}$.
  
 Then $\upbar{I}^{\mb a}=\Glimsup_m \upbar{I}^{\mb a}_m$ and
 $\downbar{I}^{\mb a}=\Gliminf_m \downbar{I}^{\mb a}_m$, where $\upbar{I}^{\mb a}$ and $\downbar{I}^{\mb a}$ are defined as in \eqref{e:optlow}-\eqref{e:optup} by changing the index $n$ with $(n,m)$ (or in other words, performing the limits $n\to\infty$ and next $m\to \infty$).
\end{corollary}
%
%
%

\begin{proposition}[General contraction principle]
\label{p:contr}
  Let $X$, $Y$ be two Polish spaces, let $(\mu_n)$ be a sequence in
  $\mathcal{P}(X)$ and for $n\in \bb N$ let $\theta_n,\,\theta \colon X \to Y$ be measurable maps. Assume that $\theta_n \to \theta$ uniformly on compact sets. Define $\gamma_n = \mu_n \circ
  \theta_n^{-1} \in \mc P(Y)$. Then
  \begin{itemize}
 \item[(i)] If $(\mu_n)$ satisfies a LD lower bound with speed $(a_n)$ and lower semicontinuous rate $\upbar I \colon X \to [0,+\infty]$, then $(\gamma_n)$ satisfies a LD lower bound with the same speed and rate $\upbar J\colon Y \to [0,+\infty]$
  \begin{equation*}
  \begin{split}
\upbar J(y) & := \inf_{x \in \downbar \Lambda^y} \upbar I(x)
  \\
 \downbar  \Lambda^y
  & := \lim_{\delta \downarrow 0} \mathrm{Interior}\big(\theta^{-1}(B_{\delta}(y))\big)
  \end{split}
  \end{equation*}

  \item[(ii)] If $(\mu_n)$ is exponentially tight and satisfies a LD upper bound with speed $(a_n)$ and  lower semicontinuous rate $\downbar I \colon X \to [0,+\infty]$, then $(\gamma_n)$ satisfies a LD weak upper bound with the same speed and rate $\downbar J\colon Y \to [0,+\infty]$
  \begin{equation*}
  \begin{split}
 \downbar J(y) & := \inf_{x \in \upbar \Lambda^y} \downbar I(x)
  \\
\upbar  \Lambda^y & := \lim_{\delta \downarrow 0} \mathrm{Closure}\big(\theta^{-1}(B_{\delta}(y))\big)
  \end{split}
  \end{equation*}
  \end{itemize}
Note in particular that $\upbar \Lambda^y \supset \theta^{-1}(y) \supset \downbar \Lambda^y$, with equality holding if $\theta$ is continuous (recovering the standard contraction principle).
\end{proposition}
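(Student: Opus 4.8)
The plan is to verify directly the defining inequalities of Definition~\ref{d:ld} for $(\gamma_n)$, handling (i) and (ii) separately; since $\gamma_n(A)=\mu_n(\theta_n^{-1}(A))$, each estimate for $(\gamma_n)$ will be read off from one for $(\mu_n)$ through a set inclusion between $\theta_n^{-1}(A)$ and a set built out of $\theta$, valid once $n$ is large. The mechanism behind such inclusions is that $\theta_n\to\theta$ uniformly on a \emph{fixed} compact set, so that $\mu_n$-mass which $\theta$ places near a point $y$ is, for $n$ large, also placed near $y$ by $\theta_n$ — provided the relevant mass can be trapped inside a fixed compact set.

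\emph{Part (i).} Let $O\subset Y$ be open. It suffices to prove $\varliminf_n a_n^{-1}\log\mu_n(\theta_n^{-1}(O))\ge-\upbar I(x)$ for every $y\in O$ and every $x\in\downbar\Lambda^y$ (the case $\upbar I(x)=+\infty$ being trivial), since taking the supremum over all such $x$ gives $\varliminf_n a_n^{-1}\log\gamma_n(O)\ge-\inf_{y\in O}\inf_{x\in\downbar\Lambda^y}\upbar I(x)=-\inf_{y\in O}\upbar J(y)$. Since $\upbar I$ is lower semicontinuous, Theorem~\ref{t:convldlow} gives $\Glimsup_n l^{\mb a}_n\le l_{\upbar I}$, so Remark~\ref{r:Glim}-(a) applied at the point $\{x\}\in\mf K(X)$ produces compacts $K_n\to\{x\}$ in $\mf K(X)$ — in particular $K_n\subset B_{\eps_n}(x)$ for some $\eps_n\to0$ — with $\varliminf_n a_n^{-1}\log\mu_n(K_n)\ge-\upbar I(x)$. (One may instead obtain such $K_n$ directly, from the open-set lower bound applied to the balls $B_\delta(x)$ together with inner regularity of the $\mu_n$ and a diagonal extraction.) The set $Q:=\{x\}\cup\bigcup_n K_n$ is compact, being the union of a Hausdorff-convergent sequence of compacts with its limit, hence $\rho_n:=\sup_{z\in Q}d_Y(\theta_n(z),\theta(z))\to0$. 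Choose $\eta>0$ with $B_{2\eta}(y)\subset O$; since $x\in\downbar\Lambda^y$ entails $x\in\mathrm{Interior}(\theta^{-1}(B_\eta(y)))$, there is $r>0$ with $B_r(x)\subset\theta^{-1}(B_\eta(y))$. For all $n$ large enough that $K_n\subset B_r(x)$ and $\rho_n<\eta$, every $z\in K_n$ satisfies $\theta(z)\in B_\eta(y)$ and $d_Y(\theta_n(z),\theta(z))<\eta$, hence $\theta_n(z)\in B_{2\eta}(y)\subset O$; thus $K_n\subset\theta_n^{-1}(O)$ and $\gamma_n(O)\ge\mu_n(K_n)$, which yields the claim.

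\emph{Part (ii).} Let $L\subset Y$ be compact; I argue by contradiction, supposing $\varlimsup_n a_n^{-1}\log\mu_n(\theta_n^{-1}(L))=-c_0$ with $0\le c_0<\inf_{y\in L}\downbar J(y)$. Fix $\ell>c_0$ and, by exponential tightness, a compact $K_\ell\subset X$ with $\mu_n(K_\ell^c)\le e^{-\ell a_n}$; from $\mu_n(\theta_n^{-1}(L))\le\mu_n(\theta_n^{-1}(L)\cap K_\ell)+e^{-\ell a_n}$ and $\ell>c_0$ one gets $\varlimsup_n a_n^{-1}\log\mu_n(\theta_n^{-1}(L)\cap K_\ell)=-c_0$ as well. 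On $K_\ell$, uniform convergence gives $\sigma_n:=\sup_{z\in K_\ell}d_Y(\theta_n(z),\theta(z))\to0$, and $\theta_n(z)\in L$ forces $d_Y(\theta(z),L)\le\sigma_n$, so for every fixed $\eps>0$ and all $n$ large $\theta_n^{-1}(L)\cap K_\ell\subset C_{\eps,\ell}:=\overline{\theta^{-1}(L^\eps)\cap K_\ell}$, a compact subset of $K_\ell$. The LD upper bound for $(\mu_n)$ on the compact $C_{\eps,\ell}$ then forces $\inf_{C_{\eps,\ell}}\downbar I\le c_0$ for every $\eps>0$. Pick $x_m\in C_{1/m,\ell}\subset K_\ell$ with $\downbar I(x_m)\le c_0+1/m$; by compactness of $K_\ell$ a subsequence $x_{m_j}\to x^*$ exists, and lower semicontinuity of $\downbar I$ gives $\downbar I(x^*)\le c_0$, while $x_{m_j}\in\overline{\theta^{-1}(L^{1/m_j})}\subset\overline{\theta^{-1}(L^\eps)}$ for $j$ large shows $x^*\in\bigcap_{\eps>0}\overline{\theta^{-1}(L^\eps)}$. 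A diagonal argument now shows $\bigcap_{\eps>0}\overline{\theta^{-1}(L^\eps)}\subset\bigcup_{y\in L}\upbar\Lambda^y$: for each $m$ choose $w_m\in\theta^{-1}(L^{1/m})$ with $d_X(w_m,x^*)<1/m$ and $y_m\in L$ with $d_Y(\theta(w_m),y_m)<1/m$; extracting $y_{m_k}\to y^*\in L$ by compactness of $L$, one has $w_{m_k}\to x^*$ and $\theta(w_{m_k})\to y^*$, hence $x^*\in\overline{\theta^{-1}(B_\delta(y^*))}$ for every $\delta>0$, i.e.\ $x^*\in\upbar\Lambda^{y^*}$. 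Therefore $\inf_{y\in L}\downbar J(y)\le\downbar J(y^*)\le\downbar I(x^*)\le c_0$, contradicting the choice of $c_0$; hence $\varlimsup_n a_n^{-1}\log\gamma_n(L)\le-\inf_{y\in L}\downbar J(y)$, the weak upper bound with rate $\downbar J$.

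The inclusions $\upbar\Lambda^y\supset\theta^{-1}(y)\supset\downbar\Lambda^y$ follow at once from $\bigcap_\delta B_\delta(y)=\{y\}$, and when $\theta$ is continuous each $\theta^{-1}(B_\delta(y))$ is open while $\theta(\overline{\theta^{-1}(B_\delta(y))})\subset\overline{B_\delta(y)}$, so both $\downbar\Lambda^y$ and $\upbar\Lambda^y$ reduce to $\theta^{-1}(y)$ and the classical contraction principle is recovered. The main obstacle is precisely that $\theta$ is merely measurable, so $\theta^{-1}$ and $\theta_n^{-1}$ do not preserve openness or closedness and the large-deviation bounds for $(\mu_n)$ cannot be transported naively; the resolution is to inject compactness into the argument — the shrinking compacts $K_n$ and the compact $Q=\{x\}\cup\bigcup_n K_n$ in (i), and exponential tightness together with the compacts $K_\ell$ in (ii) — and within (ii) the single most delicate step is the set inclusion $\bigcap_{\eps>0}\overline{\theta^{-1}(L^\eps)}\subset\bigcup_{y\in L}\upbar\Lambda^y$.
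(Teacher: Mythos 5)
Your proof is correct, but it follows a genuinely different route from the paper's. The paper lifts the problem to $\mc P(X)$: using \eqref{e:Hproj} it writes $a_n^{-1}H(\beta|\gamma_n)=\inf_{\nu\in\vartheta_n^{-1}(\beta)}a_n^{-1}H(\nu|\mu_n)$ with $\vartheta_n(\mu)=\mu\circ\theta_n^{-1}$, proves a $\Gamma$-convergence contraction lemma (Lemma~\ref{l:contrgamma}) for the functionals $a_n^{-1}H(\cdot|\mu_n)$ on the Polish space $\mc P(X)$ (with equicoercivity supplied by Theorem~\ref{t:tightld} in the upper bound), and then translates back through the equivalences (P1)$\Leftrightarrow$(H1) of Theorems~\ref{t:convldlow}--\ref{t:convldup}. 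You instead verify the defining open-set and compact-set inequalities for $(\gamma_n)$ directly: in (i) you use the recovery compacts $K_n\to\{x\}$ from the (L)-statement of Theorem~\ref{t:convldlow} (or, as you note, just inner regularity plus the ball lower bound), exploit compactness of $Q=\{x\}\cup\bigcup_nK_n$ to get $K_n\subset\theta_n^{-1}(O)$ eventually; in (ii) you trap the relevant mass in a compact $K_\ell$ via exponential tightness, transfer $\theta_n^{-1}(L)\cap K_\ell$ into the compact sets $\overline{\theta^{-1}(L^\eps)}\cap K_\ell$ by uniform convergence, and close the argument with lower semicontinuity of $\downbar I$ and the inclusion $\bigcap_{\eps>0}\overline{\theta^{-1}(L^\eps)}\subset\bigcup_{y\in L}\upbar\Lambda^y$, all of which are sound (the identifications $\downbar\Lambda^y=\bigcap_\delta\mathrm{Interior}(\theta^{-1}(B_\delta(y)))$ and $\upbar\Lambda^y=\bigcap_\delta\mathrm{Closure}(\theta^{-1}(B_\delta(y)))$ are used correctly, and the exponential splitting $\varlimsup_n a_n^{-1}\log(b_n+c_n)=\max$ of the two exponents is standard). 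What the paper's route buys is a reusable $\Gamma$-convergence contraction principle and a demonstration of the entropy/$\Gamma$-convergence machinery that is the point of the paper; what your route buys is a self-contained, elementary argument at the level of measures of open and compact sets, which moreover produces the rate $\upbar J$ in (i) directly rather than through the intermediate functional $\tilde J$ on $\mc P(X)$ and the identification $\tilde J=\upbar J$.
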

The proof requires a similar statement concerning $\Gamma$-convergence (contraction principles are surprisingly missing from the $\Gamma$-convergence literature).
\begin{lemma}
\label{l:contrgamma}
Let $X$, $Y$ be two Polish spaces, and let $(I_n)$ be a sequence of lower semicontinuous functions on $X$. Let $\theta_n$, $\theta$, $\upbar J$ and $\downbar J$ be as in Proposition~\ref{p:contr}. Define $J_n \colon Y \to [0,+\infty]$ as $J_n(y)=\inf_{x \in \theta_n^{-1}(y)} I_n(x)$. 
Then $\Glimsup_n J_n \le \upbar J$ and, if $(I_n)$ is equicoercive, $\Gliminf_n J_n \ge \downbar J$.
\end{lemma}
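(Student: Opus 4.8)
The plan is to prove the two inequalities separately, working directly with the variational characterisations of $\Gamma\text{-}\varlimsup$ and $\Gamma\text{-}\varliminf$ from Remark~\ref{r:Glim}. For the bound $\Glimsup_n J_n \le \upbar J$, fix $y \in Y$ with $\upbar J(y) < +\infty$ (otherwise nothing to prove). By definition $\upbar J(y) = \inf_{x \in \downbar\Lambda^y} \upbar I(x)$, so for $\eps>0$ pick $x \in \downbar\Lambda^y$ with $\upbar I(x) \le \upbar J(y) + \eps$. Since $x$ lies in $\downbar\Lambda^y = \lim_{\delta \downarrow 0}\mathrm{Interior}(\theta^{-1}(B_\delta(y)))$, there is $\delta_0>0$ with $x \in \mathrm{Interior}(\theta^{-1}(B_{\delta_0}(y)))$. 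By Remark~\ref{r:Glim}-(ii) applied to the $\Gamma$-limsup of $(I_n)$ at $x$, there is a sequence $x_n \to x$ with $\varlimsup_n I_n(x_n) \le \Glimsup_n I_n (x) \le \upbar I(x)$. For $n$ large, $x_n$ stays in the (open) interior of $\theta^{-1}(B_{\delta_0}(y))$, so $\theta(x_n) \in B_{\delta_0}(y)$; moreover the uniform convergence $\theta_n \to \theta$ on the compact set $\{x\}\cup\{x_n\}$ gives $\theta_n(x_n) \to \theta(x) = y$ after possibly shrinking $\delta_0$ along the way, hence $y_n := \theta_n(x_n) \to y$ in $Y$. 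Since $x_n \in \theta_n^{-1}(y_n)$ we get $J_n(y_n) \le I_n(x_n)$, so $\varlimsup_n J_n(y_n) \le \upbar I(x) \le \upbar J(y)+\eps$; letting $\eps \downarrow 0$ and invoking Remark~\ref{r:Glim}-(ii) once more yields $\Glimsup_n J_n(y) \le \upbar J(y)$.

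For the bound $\Gliminf_n J_n \ge \downbar J$ under the equicoercivity hypothesis, fix $y\in Y$ and let $y_n \to y$ in $Y$; by Remark~\ref{r:Glim}-(i) it suffices to show $\varliminf_n J_n(y_n) \ge \downbar J(y)$. If this liminf is $+\infty$ there is nothing to do, so pass to a subsequence (not relabelled) realising a finite liminf $L$, and choose $x_n \in \theta_n^{-1}(y_n)$ with $I_n(x_n) \le J_n(y_n) + 1/n$; then $\varlimsup_n I_n(x_n) = L < +\infty$, so for large $n$ the $x_n$ lie in a single sublevel set $\{I_n \le L+1\}$, which by equicoercivity has precompact union. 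Extracting a further subsequence, $x_n \to x$ for some $x \in X$. By Remark~\ref{r:Glim}-(b), $\varliminf_n I_n(x_n) \ge \Gliminf_n I_n (x) \ge \downbar I(x) \ge 0$; and the key geometric point is that $x \in \upbar\Lambda^y$. Indeed, for any $\delta>0$, uniform convergence of $\theta_n$ to $\theta$ on the compact set $\{x\}\cup\{x_n\}$ together with $\theta_n(x_n)=y_n \to y$ forces $\theta(x_n) \to y$, so $\theta(x_n) \in B_\delta(y)$ for $n$ large, i.e.\ $x_n \in \theta^{-1}(B_\delta(y))$; hence $x \in \mathrm{Closure}(\theta^{-1}(B_\delta(y)))$ for every $\delta>0$, which is exactly $x \in \upbar\Lambda^y$. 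Therefore $L = \varliminf_n I_n(x_n) \ge \downbar I(x) \ge \inf_{x' \in \upbar\Lambda^y} \downbar I(x') = \downbar J(y)$, as required.

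The step I expect to be the main obstacle is the compactness extraction in the second part: one must check carefully that the $x_n$ chosen as near-minimisers of $I_n$ over $\theta_n^{-1}(y_n)$ can be taken in a common precompact set, which forces a preliminary reduction to a subsequence along which $\varlimsup_n I_n(x_n)$ is finite — this is automatic once $\varliminf_n J_n(y_n) < +\infty$, but the bookkeeping of nested subsequences (one for the liminf, one for compactness) needs care, and one should note that the final inequality holds for the original sequence because it was shown for an arbitrary subsequential liminf. A secondary technical point, appearing in both halves, is that $\theta_n \to \theta$ only uniformly \emph{on compact sets}, so one may only invoke it on the compact set $\{x\} \cup \overline{\{x_n : n\}}$ once convergence $x_n \to x$ is in hand; in the first half this is unproblematic since the recovery sequence converges by construction, while in the second half it is available precisely after the compactness extraction. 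Everything else reduces to the routine translation between sequential statements and the $\Gamma$-limit inequalities recorded in Remark~\ref{r:Glim}.
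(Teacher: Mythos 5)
Your proof is correct and follows essentially the same route as the paper: for the $\Gamma$-limsup, push a recovery sequence of a near-minimizer $x\in\downbar\Lambda^y$ through $\theta_n$, using the interior condition plus uniform convergence on the compact $\{x\}\cup\{x_n\}$ to get $\theta_n(x_n)\to y$; for the $\Gamma$-liminf, take near-minimizers $x_n\in\theta_n^{-1}(y_n)$, use equicoercivity to extract a limit point, check it lies in $\upbar\Lambda^y$, and apply the $\Gamma$-liminf inequality for $(I_n)$. The only cosmetic slip is citing Remark~\ref{r:Glim}-(ii) where the existence of the recovery sequence is Remark~\ref{r:Glim}-(a); otherwise the argument matches the paper's, and your subsequence bookkeeping in the liminf part is if anything slightly more explicit.
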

\begin{proof} Fix $y\in Y$.

\smallskip
\noindent
 \textit{$\Gamma$-limsup inequality.}
If $\downbar \Lambda^y=\emptyset$ there is nothing to prove. Otherwise, take $\eps>0$ and $x_\eps \in \downbar \Lambda^y$ such that $\upbar J(y) \ge I(x_\eps)-\eps$. Then there is a sequence $(x_{n,\eps})$ converging to $x_\eps$ in $X$ such that $\varlimsup_n I_n(x_{n,\eps}) \le I(x_\eps)$. Since $x_\eps \in \downbar \Lambda^y$, for all $\delta>0$ and $n\ge n_\delta$ large enough, $x_{n,\eps} \in \mathrm{Interior}\big(\theta^{-1}(B_\delta(x))\big)$. So that $\theta(x_{n,\eps})\to y$, and setting $y_{n,\eps} = \theta_n(x_{n,\eps})$ one has $\lim_n y_{n,\eps} = y$. On the other hand, $\varlimsup_n J_n(y_{n,\eps}) \le \varlimsup_n I(x_{n,\eps}) \le \upbar J(y)+\eps$. Thus there is a subsequence $(\eps_n)$ such that $\varliminf_n J_n(y_n) \le \upbar J_n(y)+\eps$ with $y_n = y_{n,\eps_n}$.

\smallskip
\noindent
 \textit{$\Gamma$-liminf inequality.}
 Let $(y_n)$ be a sequence converging to $y$. Up to passing to a subsequence (still label $n$ here), we can assume $\sup_n J_n(y_n)<+\infty$, the inequality being trivial otherwise. In particular, $\theta_n^{-1}(y_n) \neq \emptyset$. For $\eps>0$, let $x_{n,\eps} \in \theta_n^{-1}(y_n)$ be such that $I_n(x_{n,\eps})\le J_n(y_n)+\eps$. Since $I_n$ is equicoercive and $J_n(y_n)$ uniformly bounded, $(x_{n,\eps})$ is precompact. It is easy to check that any limit point of $(x_{n,\eps})$ is in $\upbar \Lambda^y$ (which is nonempty under the above assumptions). In particular, by the $\Gamma$-liminf inequality for $(I_n)$
\begin{equation*}
\varliminf_n J_n(y_n) \ge \varliminf_n I_n(x_{n,\eps})-\eps \ge \downbar J(y)-\eps
\end{equation*}
and we get the statement since $(y_n)$ and $\eps>0$ where arbitrary.
\end{proof}
\begin{proof}[Proof of Theorem~\ref{p:contr}]
Let $\vartheta \colon \mc P(X) \to \mc P(Y)$ be defined by $\vartheta(\mu)=\mu\circ \theta^{-1}$, and let $\vartheta_n$ be defined similarly. It is easy to see that $\vartheta_n \to \vartheta$ uniformly on compact subsets of the Polish space $\mc P(X)$. \eqref{e:Hproj} implies that for $\beta \in \mc P(Y)$
\begin{equation*}
\tfrac{1}{a_n} H(\beta| \gamma_n)= \tfrac{1}{a_n}  H(\beta| \vartheta_n(\mu_n))
= \inf_{\nu \in \vartheta_n^{-1}(\beta)}\, \tfrac{1}{a_n} H(\nu|\mu_n)
\end{equation*}
Therefore, by Lemma~\ref{l:contrgamma} (applied to the Polish space $\mc P(X)$ and maps $\vartheta_n$) and the equivalence (P1)-(H1) in Theorem~\ref{t:convldlow}, $(\gamma_n)$ satisfies a LD lower bound with speed $(a_n)$ and rate
\begin{equation*}
\begin{split}
\tilde J(y):= \inf_{\nu \in \downbar \Delta^y} \int_X \nu(dx)\,\upbar I(x)
\\
 \downbar \Delta^y=\lim_{r \downarrow 0} \mathrm{Interior}\big(\vartheta^{-1}(\mc B_r(\delta_y))\big)
\end{split}
\end{equation*}
where $\mc B_r(\delta_y)$ is the open ball of radius $r>0$ centered in $\delta_y$ in $\mc P(X)$ (with respect to a fixed compatible distance on $\mc P(X)$). However, since $\delta_x \in  \downbar \Delta^y$ iff $x \in \downbar \Lambda^y$, it is easy to see that $\tilde J= \upbar J$. Namely the statement (i) holds.

In order to prove (ii), note that by Theorem~\ref{t:tightld} the sequence of functionals $\tfrac{1}{a_n} H(\cdot|\mu_n)$ is equicoercive. One can then apply the $\Gamma$-liminf statement in Lemma~\ref{l:contrgamma}, to prove (ii) following exactly the same lines as in (i).
\end{proof}

The following result appears to be new in such a generality.
\begin{theorem}[Large deviations for coupled systems]
\label{t:coupled}
 For $n\in \bb N$, $i=1,2$ let $(\Omega^i,\,\mf F^i,\,\bb P^i_n)$ be standard probability spaces, and let $(\Omega,\,\mf F,\,\bb P_n)$ be their product space. Let $X,\,Y$ be Polish spaces with compatible distances $d_X$ and $d_Y$. Assume that for each $n$ there are measurable maps $F_n \colon Y\times \Omega^1 \to X$, $G_n \colon X\times \Omega^2 \to Y$, $\xi_n \colon \Omega \to X$ and $\eta_n \colon \Omega\to Y$ such that $\bb P_n$-a.s.
\begin{equation*}
\begin{split}
& \xi_n(\omega^1,\omega^2)=F_n(\eta_n(\omega^1,\omega^2),\omega^1)
\\
& \eta_n(\omega^1,\omega^2)=G_n(\xi_n(\omega^1,\omega^2),\omega^2)
\end{split}
\end{equation*}
For fixed $x\in X$, $y\in Y$ define $f_n^y(\omega^1)=F_n(y,\omega^1)$, $g_n^x(\omega^2)=G_n(x,\omega^2)$, and let $\mu_n^y:= \bb P^1_n \circ (f_n^y)^{-1} \in \mc P(X)$ and $\nu_n^x:=\bb P^2_n \circ (g_n^x)^{-1} \in \mc P(Y)$ be the laws of $f_n^y$ and $g_n^x$ respectively. Assume that for fixed $x\in X$, $y \in Y$ there exists  a positive function $q \equiv q^{x,y} \in C_{\mathrm{b}}(\bb R^+;\bb R^+)$ with $q(0)=0$  such that
\begin{itemize}
\item[(i)] $(\mu_n^y)$ satisfies a weak LDP with speed $(a_n)$ and lower semicontinuous rate $x \mapsto K^y(x)$.
\item[(ii)] $(\nu_n^x)$ satisfies a weak LDP with speed $(a_n)$ and lower semicontinuous rate $y \mapsto J^x(y)$.

\item[(iii)] For each $\eps>0$
\begin{equation*}
\varlimsup_n
\tfrac{1}{a_n} \log \bb P_n\Big(d_X(\xi_n,x)+d_Y(\eta_n,y) \ge q\big(d_X(f_n^y,x)+ d_Y(g_n^x,y)\big)+\eps \Big)=-\infty
\end{equation*}
\item[(iv)] For each $\eps>0$
\begin{equation*}
\varlimsup_n
\tfrac{1}{a_n} \log \bb P_n\Big(d_X(f_n^y,x)+ d_Y(g_n^x,y) \ge q\big(d_X(\xi_n,x)+d_Y(\eta_n,y)\big)+\eps \Big)=-\infty
\end{equation*}

\end{itemize}
Define $I\colon X\times Y \to [0,+\infty]$ as the lower semicontinuous envelope of the map $(x,y) \mapsto K^y(x)+J^x(y)$ and let $\gamma_n := \bb P_n \circ (\xi_n,\eta_n)^{-1}$ be the law  $(\xi_n,\eta_n)$. Then $(\gamma_n)$ satisfies a weak LDP with speed $(a_n)$ and rate $I$.
\end{theorem}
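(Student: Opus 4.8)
Since every hypothesis of the statement refers to a fixed pair $(x,y)\in X\times Y$, the plan is to prove the corresponding local large deviation estimate at $(x,y)$ and then to conclude by the representation formulas of Proposition~\ref{p:exrepr}. Throughout, equip $X\times Y$ with the product metric $d_X+d_Y$, abbreviate $\tilde I(x,y):=K^y(x)+J^x(y)$ and $\bar q(s):=\sup_{0\le t\le s}q^{x,y}(t)$, so that $\bar q(s)\downarrow 0$ as $s\downarrow 0$ since $q^{x,y}$ is continuous with $q^{x,y}(0)=0$. The structural observation driving the argument is that, under $\bb P_n=\bb P^1_n\otimes\bb P^2_n$, the variables $f^y_n$ and $g^x_n$ are \emph{independent}, being functions of $\omega^1$, respectively $\omega^2$, alone; hence the $\bb P_n$-law of $(f^y_n,g^x_n)$ is $\mu^y_n\otimes\nu^x_n$. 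Applying Proposition~\ref{p:exrepr} to the weak LDPs (i)--(ii), for every such $(x,y)$ the limits $\lim_{\delta\downarrow 0}\varlimsup_n a_n^{-1}\log\mu^y_n(B_\delta(x))$ and $\lim_{\delta\downarrow 0}\varliminf_n a_n^{-1}\log\mu^y_n(B_\delta(x))$ both exist and equal $-K^y(x)$, and similarly $\lim_{\delta\downarrow 0}\varlimsup_n a_n^{-1}\log\nu^x_n(B_\delta(y))=\lim_{\delta\downarrow 0}\varliminf_n a_n^{-1}\log\nu^x_n(B_\delta(y))=-J^x(y)$. Applying the same proposition to $(\gamma_n)$, and recalling that its optimal lower and weak upper bound rate functionals $\upbar I^{\mb a}$, $\downbar I^{\mb a}$ are lower semicontinuous with $\downbar I^{\mb a}\le\upbar I^{\mb a}$ while $I$ is the lower semicontinuous envelope of $\tilde I$, it suffices to prove that for every $(x,y)$
\begin{equation*}
\lim_{\delta\downarrow 0}\varlimsup_n a_n^{-1}\log\gamma_n(B_\delta(x,y))\;\le\;-\tilde I(x,y)\;\le\;\lim_{\delta\downarrow 0}\varliminf_n a_n^{-1}\log\gamma_n(B_\delta(x,y)).
\end{equation*}
Indeed these inequalities read $\downbar I^{\mb a}(x,y)\ge\tilde I(x,y)$ and $\upbar I^{\mb a}(x,y)\le\tilde I(x,y)$, whence $I\le\downbar I^{\mb a}\le\upbar I^{\mb a}\le I$, i.e.\ $(\gamma_n)$ obeys a weak LDP with rate $I$.

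For the rightmost inequality I would use (iii). Assume $\tilde I(x,y)<+\infty$ (the claim being trivial otherwise), fix small $\rho>0$, take $\eps=\rho$ in (iii), and set $\delta:=\bar q(\rho)+\rho$. On $\{f^y_n\in B_{\rho/2}(x)\}\cap\{g^x_n\in B_{\rho/2}(y)\}$ one has $d_X(f^y_n,x)+d_Y(g^x_n,y)<\rho$, hence off the exceptional set $\mc E_n$ of (iii) one gets $d_X(\xi_n,x)+d_Y(\eta_n,y)<\bar q(\rho)+\rho=\delta$, i.e.\ $(\xi_n,\eta_n)\in B_\delta(x,y)$. By independence, $\gamma_n(B_\delta(x,y))\ge\mu^y_n(B_{\rho/2}(x))\,\nu^x_n(B_{\rho/2}(y))-\bb P_n(\mc E_n)$. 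Using the frozen limits, for $\rho$ small enough there is a constant $M>\tilde I(x,y)$ with $\mu^y_n(B_{\rho/2}(x))\nu^x_n(B_{\rho/2}(y))\ge e^{-M a_n}$ for $n$ large, while by (iii) $\bb P_n(\mc E_n)\le e^{-(M+1)a_n}$ for $n$ large; hence $\gamma_n(B_\delta(x,y))\ge\mu^y_n(B_{\rho/2}(x))\nu^x_n(B_{\rho/2}(y))\,(1-e^{-a_n})$ eventually, so $a_n^{-1}\log\gamma_n(B_\delta(x,y))\ge a_n^{-1}\log\mu^y_n(B_{\rho/2}(x))+a_n^{-1}\log\nu^x_n(B_{\rho/2}(y))+o(1)$. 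Taking $\varliminf_n$, using superadditivity of $\varliminf$, and then letting $\rho\downarrow 0$ (hence $\delta\downarrow 0$) gives $\lim_{\delta\downarrow 0}\varliminf_n a_n^{-1}\log\gamma_n(B_\delta(x,y))\ge-\tilde I(x,y)$.

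For the leftmost inequality I would argue symmetrically with (iv). Fix small $\delta>0$, take $\eps=\delta$ in (iv), and set $r:=\bar q(\delta)+\delta$. On $\{(\xi_n,\eta_n)\in B_\delta(x,y)\}$ one has $d_X(\xi_n,x)+d_Y(\eta_n,y)<\delta$, so off the exceptional set $\mc E_n'$ of (iv) one gets $d_X(f^y_n,x)+d_Y(g^x_n,y)<\bar q(\delta)+\delta=r$, in particular $f^y_n\in B_r(x)$ and $g^x_n\in B_r(y)$; by independence $\gamma_n(B_\delta(x,y))\le\mu^y_n(B_r(x))\,\nu^x_n(B_r(y))+\bb P_n(\mc E_n')$. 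For each fixed $\delta$, (iv) forces $a_n^{-1}\log\bb P_n(\mc E_n')\to-\infty$, so by subadditivity of $\varlimsup$ one gets $\varlimsup_n a_n^{-1}\log\gamma_n(B_\delta(x,y))\le\varlimsup_n a_n^{-1}\log\mu^y_n(B_r(x))+\varlimsup_n a_n^{-1}\log\nu^x_n(B_r(y))$; letting $\delta\downarrow 0$ (hence $r\downarrow 0$) and invoking the frozen limits yields $\lim_{\delta\downarrow 0}\varlimsup_n a_n^{-1}\log\gamma_n(B_\delta(x,y))\le-\tilde I(x,y)$, completing the reduction.

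The step I expect to be most delicate is the opening reduction: one must be sure that the ball-probability representations of Proposition~\ref{p:exrepr} genuinely capture the \emph{weak} upper bound, which is a statement about compact sets, whereas closed balls in the (possibly infinite-dimensional) Polish spaces $X$, $Y$ need not be precompact — this is precisely why the whole argument is routed through Proposition~\ref{p:exrepr}, whose proof is entropic, rather than through a covering of compacta by frozen estimates. A secondary but essential bookkeeping point, flagged above, is to keep the super-exponential decay rate of $\mc E_n$ strictly above the scale $M$ of the frozen probabilities, so that the subtraction in the lower bound stays comparable to $e^{-Ma_n}$; and to note that $r=\bar q(\delta)+\delta\to 0$, so that the limit $\delta\downarrow 0$ really reads off the frozen rates at $x$ and $y$ rather than in a neighbourhood of them.
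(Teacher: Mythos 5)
Your proposal is correct, but it follows a genuinely different route from the paper's. The paper never estimates ball probabilities of $\gamma_n$ directly: both bounds are run through the entropic characterizations (H1) of Theorems~\ref{t:convldlow} and~\ref{t:convldup}. For the lower bound it takes recovery sequences $\kappa_n\to\delta_x$, $\lambda_n\to\delta_y$ for the frozen entropies, lifts them via \eqref{e:Hproj2} to tilted measures $\bb Q_n=\bb Q^1_n\otimes\bb Q^2_n$ on $\Omega$, bounds $\tfrac1{a_n}H(\bb Q_n|\bb P_n)$ by $K^y(x)+J^x(y)$, and uses (iii) together with the entropy inequality \eqref{e:H4} to transfer the superexponential estimate from $\bb P_n$ to $\bb Q_n$, so that the $\bb Q_n$-law of $(\xi_n,\eta_n)$ converges to $\delta_{(x,y)}$; for the upper bound it starts from an arbitrary $\beta_n\to\delta_{(x,y)}$ with bounded rescaled entropy, realizes it as $\bb Q_n\circ(\xi_n,\eta_n)^{-1}$, uses (iv) plus \eqref{e:H4} to show the $\bb Q_n$-marginal laws of $f^y_n$, $g^x_n$ converge to $\delta_x$, $\delta_y$, and a disintegration/Jensen step to get $H(\bb Q_n|\bb P_n)\ge H(\bb Q^1_n|\bb P^1_n)+H(\bb Q^2_n|\bb P^2_n)$, before invoking the frozen (H1) statements. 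You instead exploit the independence of $f^y_n$ and $g^x_n$ under the product measure $\bb P_n$ to factorize frozen ball probabilities, let (iii)--(iv) transfer ball events between $(f^y_n,g^x_n)$ and $(\xi_n,\eta_n)$ at superexponentially small cost, and conclude through the local (ball) representation of the optimal rates in Proposition~\ref{p:exrepr}; your bookkeeping (the margin $M>\tilde I(x,y)$ against the $e^{-(M+1)a_n}$ error, the monotonicity in $\delta$ needed to pass from the special radii $\bar q(\rho)+\rho$ to the limit $\delta\downarrow0$, and the reduction $I\le\downbar I^{\mb a}\le\upbar I^{\mb a}\le I$ via lower semicontinuity of the optimal rates) is sound, and you correctly identify that the only non-elementary ingredient is precisely Proposition~\ref{p:exrepr}, whose entropic proof is what makes the ball-limsup characterization of the optimal weak upper bound legitimate on a non-locally-compact Polish space (note that you use the correct pairing, liminf of ball probabilities with $\upbar I^{\mb a}$ and limsup with $\downbar I^{\mb a}$, with the minus sign that is missing as a typo in \eqref{e:optlow}--\eqref{e:optup}). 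In short, both arguments lean on the paper's machinery, yours at the level of Proposition~\ref{p:exrepr} and the paper's at the level of the (H1) equivalences; yours avoids the coupling, lifting and disintegration constructions entirely and is arguably more elementary, while the paper's version is the one that showcases the entropy/$\Gamma$-convergence formalism and manipulates measures on the abstract spaces $\Omega^i$ rather than probabilities of metric balls.
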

In the above theorem, (iii) and (iv) are basically uniform regularity requirements on $F_n,\,G_n$. (iii) is only used in the lower bound, (iv) in the upper bound. Theorem~\ref{t:coupled} applies in the following kind of situations. Suppose we have a weak solution to the system of SDEs on $\bb R \times \bb R$
\begin{equation}
\label{e:sdes}
\begin{cases}
& \dot \xi= b_n(\xi,\eta) + \tfrac{1}{n} \dot W^1
\\
& \dot \eta=c_n(\xi,\eta)+\tfrac{1}{n} \dot W^2
\end{cases}
\end{equation}
where $W^1$ and $W^2$ are independent Brownian motions. If one knows the LD on $C([0,T];\bb R)$ of the solutions to
\begin{equation*}
\dot \zeta = b_n(\zeta,y)  + \tfrac{1}{n} \dot W^1
\end{equation*}
\begin{equation*}
\dot \zeta=c_n(x,\zeta)+\tfrac{1}{n} \dot W^2
\end{equation*}
for fixed $x,\,y\in C([0,T];\bb R)$, and if conditions (iii)-(iv) is satisfied (which happens under uniform Lipschitz conditions on $b_n$ and $c_n$), then one gets the LD for the law of the original coupled system \eqref{e:sdes}. While this kind of statement can be quite standard for finite-dimensional systems, Theorem~\ref{t:coupled} also applies for instance in the stochastic PDEs framework, and when considering asymptotics other than the small noise limit (e.g.\ slow-fast random dynamics).

\begin{proof}[Proof of Theorem~\ref{t:coupled}]
Fix $x\in X$, $y\in Y$ and let $q\equiv q^{x,y}$ be as in the hypothesis.

\smallskip
\noindent
\textit{Lower bound.} By (i), (ii) and Theorem~\ref{t:convldlow}-(H1) there exist sequences $(\kappa_n)$ in $\mc P(X)$, $(\lambda_n)$ in $\mc P(Y)$ such that 
\begin{equation}
\label{e:alphabetan}
\begin{split}
& \kappa_n \to \delta_x \quad \text{and} \quad \varlimsup_n \tfrac{1}{a_n} H(\kappa_n|\mu_n^y)\le K^y(x)
\\
& \lambda_n \to \delta_y \quad \quad \text{and} \quad \varlimsup_n \tfrac{1}{a_n} H(\lambda_n|\nu_n^x)\le J^x(y)
\end{split}
\end{equation}
By \eqref{e:Hproj2} there exist probabilities $(\bb Q^1_n)$, $(\bb Q^2_n)$ on $(\Omega^1,\mf F^1)$, $(\Omega^2, \mf F^2)$ respectively, such that $\kappa_n=\bb Q^1_n \circ {(f_n^y)}^{-1}$, $\lambda_n=\bb Q_n^2 \circ {(g_n^x)}^{-1}$ and 
\begin{equation}
\label{e:HQP}
H(\kappa_n|\mu_n^y)=H(\bb Q^1_n|\bb P^1_n)
\qquad \qquad
H(\lambda_n|\nu_n^x)=H(\bb Q^2_n|\bb P^2_n)
\end{equation}
Set now $\bb Q_n=\bb Q^1_n \otimes \bb Q^2_n$, and define $\beta_n \in \mc P(X\times Y)$ as the law of $(\xi_n,\,\eta_n)$ under $\bb Q_n$, $\beta_n:= \bb Q_n \circ (\xi_n,\eta_n)^{-1}$. Then patching \eqref{e:alphabetan} and \eqref{e:HQP} together
\begin{equation}
\label{e:Hbeta}
\begin{split}
 \varlimsup_n \tfrac{1}{a_n} H(\beta_n|\gamma_n) & \le  \varlimsup_n \tfrac{1}{a_n} H(\bb Q_n|\bb P_n)  =
\varlimsup_n \tfrac{1}{a_n} H(\bb Q_n^1|\bb P_n^1)+ \tfrac{1}{a_n} H(\bb Q_n^1|\bb P_n^1) 
\\
 & \le \varlimsup_n \tfrac{1}{a_n} H(\kappa_n|\mu_n^y)
+ \varlimsup_n \tfrac{1}{a_n} H(\lambda_n|\nu_n^x)
\le K^y(x)+J^x(y)
\end{split}
\end{equation}
In particular, if $ K^y(x)+J^x(y)<+\infty$, $\tfrac{1}{a_n} H(\bb Q_n|\bb P_n)$ is uniformly bounded. Thus by (iii) and \eqref{e:H4}, for each $\eps>0$
\begin{equation}
\label{e:Qeq3}
\varlimsup_n
\bb Q_n\Big(d_X(\xi_n,x)+d_Y(\eta_n,y) \ge q\big(d_X(f_n^y,x)+ d_Y(g_n^x,y)\big)+\eps \Big)=0
\end{equation}
By \eqref{e:alphabetan} and $q(0)=0$, for all $\eps'>0$
\begin{equation}
\label{e:Qeq6}
\varlimsup_n \bb Q_n\big(q(d_X(f_n^y,x)+ d_Y(g_n^x,y))>\eps' \big)=0
\end{equation}
\eqref{e:Qeq3} and \eqref{e:Qeq6} yield $\beta_n \to \delta_{(x,y)}$ in $\mc P(X\times Y)$. Inequality \eqref{e:Hbeta} and Theorem~\ref{t:convldlow}-(H1) imply that the lower bound holds with rate $ K^y(x)+J^x(y)$, and by Corollary~\ref{c:bounds}, it holds with its lower semicontinuous envelope $I$.

\smallskip
\noindent
\textit{Upper bound.} Assume that $\beta_n \in \mc P(X\times Y)$ is such that $\beta_n \to \delta_{(x,y)}$. By Theorem~\ref{t:convldup}, we need to prove $\varliminf_n a_n^{-1}\,H(\beta_n|\gamma_n)\ge I(x,y)$. Up to passing to a subsequence, one can assume $a_n^{-1} H(\beta_n|\gamma_n)$ to be bounded uniformly in $n$, so that by the Remark~\ref{r:relentr} there exists a probability $\bb Q_n$ on $(\Omega,\mf F)$ such that $\beta_n =\bb Q_n \circ (\xi_n,\eta_n)^{-1}$ and $H(\beta_n|\gamma_n)= H(\bb Q_n|\bb P_n)$. Let $\bb Q_n^1(d\omega^1)$ and $\bb Q_n^2(d\omega^2)$ be the marginals of $\bb Q_n$ on $\Omega^1$ and $\Omega^2$ respectively. For all $\eps>0$
\begin{equation}
\label{e:Qeq7}
\begin{split}
& \bb Q_n^1(d_X(f^y_n,x)>\eps)+\bb Q_n^2(d_Y(g^x_n,y)>\eps)
\\
&\quad =\bb Q_n(d_X(f^y_n,x)>\eps)+\bb Q_n(d_Y(g^x_n,y)>\eps)
\le 2\,  \bb Q_n \big(d_X(f_n^y,x)+ d_Y(g_n^x,y)>\eps\big)
\\
& \quad \le 2\,\bb Q_n\Big(d_X(f_n^y,x)+ d_Y(g_n^x,y)> q\big(d_X(\xi_n,x)+d_Y(\eta_n,y)\big)+\eps/2 \Big)
\\
 & \qquad +2\,\bb Q_n\Big(q\big(d_X(\xi_n,x)+d_Y(\eta_n,y)\big)>\eps/2 \Big)
\end{split}
\end{equation}
The last line of \eqref{e:Qeq7} vanishes as $n\to +\infty$, since $\beta_n \to \delta_{x,y}$. 
On the other hand, by \eqref{e:H4} and hypothesis (iv) 
\begin{equation*}
\begin{split}
& \varlimsup_n\bb Q_n\Big(d_X(f_n^y,x)+ d_Y(g_n^x,y)> q\big(d_X(\xi_n,x)+d_Y(\eta_n,y)\big)+\eps/2 \Big)
\\
& \quad \le \varlimsup_n \frac{\frac{\log 2}{a_n} + \frac{1}{a_n}H(\bb Q_n|\bb P_n)}{-\frac{1}{a_n} 
 \log \bb P_n\Big(d_X(f_n^y,x)+ d_Y(g_n^x,y)> q\big(d_X(\xi_n,x)+d_Y(\eta_n,y)\big)+\eps/2  \Big)}=0
\end{split}
\end{equation*}
Thus by \eqref{e:Qeq7}, for all $\eps>0$
\begin{equation*}
\lim_n \bb Q_n^1(d_X(f^y_n,x)>\eps)=\lim_n\bb Q_n^2(d_Y(g^x_n,y)>\eps)=0
\end{equation*}
and, letting $\kappa_n=\bb Q^1_n \circ {(f_n^y)}^{-1} \in \mc P(X)$, $\lambda_n=\bb Q_n^2 \circ {(g_n^x)}^{-1} \in \mc P(Y)$, we gather
\begin{equation}
\label{e:convQ12}
\lim_n \kappa_n = \delta_x \quad \text{in $\mc P(X)$} \qquad \quad 
\lim_n \lambda_n = \delta_y \quad \text{in $\mc P(Y)$} 
\end{equation}

Disintegrate now $\bb Q_n$ as $\bb Q_n(d\omega^1,d\omega^2)= \bb Q_n^1(d\omega^1) {\bf Q}_n(\omega^1;d \omega^2)$. Then by explicit calculations and Jensen inequality
\begin{equation}
\label{e:Hboundeasy}
\begin{split}
H(\beta_n|\gamma_n)&  = H(\bb Q_n|\bb P_n)= H(\bb Q_n^1|\bb P_n^1)+ \int_{\Omega^1} \bb Q_n(d\omega^1)\,H({\bf Q}_n(\omega^1;\cdot)|\bb P_n^2) 
\\
& \ge H(\bb Q_n^1|\bb P_n^1)+
H\Big(\int_{\Omega^1} \bb Q_n(d\omega^1)\,{\bf Q}_n(\omega^1;\cdot) \Big|\bb P_n^2\Big) 
\\
& = H(\bb Q_n^1|\bb P_n^1)+ H(\bb Q_n^2|\bb P_n^2)+
\ge H(\kappa_n^y|\mu_n^y) +H(\lambda_n^y|\nu_n^x)
\end{split}
\end{equation}
By \eqref{e:convQ12}, hypotheses (i), (ii) and Theorem~\ref{t:convldup}-(H1)
\begin{equation*}
\begin{split}
\varliminf_n \tfrac{1}{a_n}H(\beta_n|\gamma_n)&  \ge 
  \varliminf_n \tfrac{1}{a_n} H(\kappa_n^y|\mu_n^y)
   +\varliminf_n \tfrac{1}{a_n} H(\lambda_n^y|\nu_n^x)
 \\
  & \ge K^y(x)+J^x(y) \ge I(x,y)
\end{split}
\end{equation*}
concluding the proof.
\end{proof}


\section{LD and $\Gamma$-convergence topology}
\label{s:5}
We say that the speed $\mb a$ is trivial for the LD of $(\mu_n)$ if the functionals $\upbar{I}^{\mb a}$, $\downbar{I}^{\mb a}$ only take the values $0$ and $+\infty$. Assume, for the sake of simplicity, that $(\mu_n)$ converges to $\mu$ in $\mc P(X)$, and note
\begin{equation}
\label{e:suppinc}
\begin{split}
\mathrm{Support}(\mu)
 \subset
\mathrm{Closure}\big(
\varliminf_n \mathrm{Support}(\mu_n)\big)
\end{split}
\end{equation}
If the inclusion \eqref{e:suppinc} is actually an equality, which means that the measures $\mu_n$ do not feature any concentration phenomena in the limit $n\to \infty$, then it is easy to check that
\begin{equation*}
\upbar{I}^{\mb a}(x)=\downbar{I}^{\mb a}(x)=
\begin{cases}
0 & \text{if $x \in \mathrm{Support}(\mu)$}
\\
+\infty & \text{otherwise}
\end{cases}
\end{equation*}
regardless of the speed $(a_n)$. That is, the LD of $(\mu_n)$ are trivial. On the other hand, if the inclusion \eqref{e:suppinc} is strict, one can prove that there exists a non-trivial speed $(a_n)$. This remark suggests that, when considering LD as a notion of convergence on the space of couples $(a,\mu) \in \bb R^+ \times \mathcal{P}(X)$, one should identify the singular measures, since no speed $(a_n)$ can catch the concentration speed of the support of Dirac masses. More precisely, recall \eqref{e:lamu}, and define the equivalence relation on $\bb R^+ \times \mathcal{P}(X)$
 \begin{equation*}
(a,\mu) \sim (a',\mu')  \quad \Leftrightarrow \quad \{ (a,\mu)=(a',\mu') \text{ or } \exists x\in X\,:\:\mu=\mu'=\delta_x\} \quad \Leftrightarrow \quad l_{a,\mu}=l_{a',\mu'}
 \end{equation*}
and let
\begin{equation*}
\begin{split}
\mc U(X) & :=  \bb R^+ \times \mathcal{P}(X) / \sim
\\ 
\mc V(X) &:= \big\{ I\colon X \to [0,+\infty],\,\text{$I$ is lower semicontinuous}\big \}
\\
\mc W(X) & := \mc U(X) \cup \mc V(X)
\\
\mc L(X) & := \big\{l\colon\mf K(X) \to [0,+\infty],\,\text{$l$ is lower semicontinuous, $l(K) \le l(K')$ if $K \supset K'$}  \big\}
\end{split}
\end{equation*}
We want to look at LD as a notion of convergence in $\mc W(X)$. To our aim, $\mc L(X)$ is naturally equipped with the topology of $\Gamma$-convergence \cite[Chapter 10]{DM} on $\mf K(X)$. One can prove that $\mc L(X)$ is a $T_1$, supercompact space (an easy extension of \cite[Theorem~10.6]{DM}). The maps \eqref{e:lamu}, \eqref{e:lI} define an injection $\mc W(X) \hookrightarrow \mc L(X)$, and we equip $\mc W(X)$ with the induced topology. We say that a subset $\mc W(X)$ is equicoercive, if its homeomorphic image in $\mc L(X)$ is equicoercive.
Note in particular that a sequence $(a_n,\mu_n)$ is equicoercive  iff $(\mu_n)$ is exponentially tight with speed $(a_n)$.

The following theorem is a consequence of the equivalence between the (L) and (P) statements in Proposition~\ref{p:conv} and Theorems~\ref{t:convldlow}-\ref{t:convldup}, and the metrizability properties of the topology of $\Gamma$-convergence for equicoercive subsets \cite[Theorem~10.22]{DM}.
\begin{theorem}
\label{t:topol}
Let $(w_n)$ be a sequence converging to $w$ in $\mc W(X)$. Then
\begin{itemize}
\item[(i)] Up to $\sim$ identification, if $w_n=(a_n,\mu_n)$ and $(a_n)$ is bounded, then $a_n \to a$ and $\mu_n \to \mu$ and $w=(a,\mu) \in \mc U(X)$.
\item[(ii)] If $w_n=(a_n,\mu_n)$ and $a_n \to +\infty$, then $w=I \in \mc V(X)$ and $\mu_n$ satisfies a weak LDP with speed $(a_n)$ and rate $I$.
\item[(iii)] If $w_n=I_n$, then $w=I$ and $I_n$ $\Gamma$-converges to $I$.
\end{itemize}
The relative topology induced by $\mc W(X)$ on an equicoercive subset $\mc E$ is metrizable.
\end{theorem}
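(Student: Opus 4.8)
The plan is to carry everything over to $\mc L(X)$, where the topology is that of $\Gamma$-convergence of set-maps on $\mf K(X)$, and to feed the resulting $\Gamma$-convergence statements into the (L)$\Leftrightarrow$(P) equivalences of Proposition~\ref{p:conv} and Theorems~\ref{t:convldlow}--\ref{t:convldup}. First I would record that $\mc L(X)$ is stable under $\Gamma$-limits: lower semicontinuity of a $\Gamma$-limit is automatic by Remark~\ref{r:Glim}, and monotonicity persists since a recovery sequence $K_n\to K$ may be replaced by $K_n\cup K\to K$ without raising values. Consequently $\mc L(X)$ is sequentially $\Gamma$-compact and, by \cite[Ch.~10]{DM}, a sequence in $\mc L(X)$ converges in the $\Gamma$-topology exactly when it $\Gamma$-converges on $\mf K(X)$; hence $w_n\to w$ in $\mc W(X)$ if and only if $l_{w_n}$ $\Gamma$-converges to $l_w$ on $\mf K(X)$. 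This already disposes of the last assertion: an equicoercive $\mc E\subset\mc W(X)$ has equicoercive image in $\mc L(X)$, and on equicoercive families the $\Gamma$-topology is metrizable by \cite[Theorem~10.22]{DM}, so the induced topology on $\mc E$ is metrizable.

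For (iii), with $w_n=I_n\in\mc V(X)$, I would first establish an auxiliary contraction lemma: for lower semicontinuous $I_n,I$ on the Polish space $X$, $I_n$ $\Gamma$-converges to $I$ on $X$ if and only if $l_{I_n}$ $\Gamma$-converges to $l_I$ on $\mf K(X)$. The forward direction is the usual recovery-sequence/minimizer manipulation --- thicken a recovery sequence for a minimizer of $I$ over $K$ to obtain the $\Glimsup$ bound, and take minimizers of $I_n$ over $K_n$ for $K_n\to K$, extract a limit lying in $K$, and use lower semicontinuity of $I$ for the $\Gliminf$ bound --- while the reverse follows by evaluation at singletons, $l_I(\{x\})=I(x)$. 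Granting this, $\Gamma$-compactness of functionals on the second countable space $X$ (\cite[Ch.~8]{DM}) yields a subsequence along which $I_{n_k}$ $\Gamma$-converges to some lower semicontinuous $I$; the lemma and uniqueness of $\Gamma$-limits give $l_w=l_I$, hence $w=I\in\mc V(X)$ by injectivity of $\mc W(X)\hookrightarrow\mc L(X)$, and the standard sub-subsequence argument upgrades this to $I_n$ $\Gamma$-converging to $I$.

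For (i) and (ii), write $w_n=(a_n,\mu_n)$. In (i), pass to a subsequence with $a_{n_k}\to a>0$; since $l_{a,\mu_{n_k}}=(a_{n_k}/a)\,l_{a_{n_k},\mu_{n_k}}$ and multiplication by a factor tending to $1$ does not change $\Gamma$-limits, the rescaled maps still $\Gamma$-converge to $l_w$; checking that $l_w$ --- being in the image of $\mc W(X)$ and a $\Gamma$-limit of finite-speed set-maps --- must be of the form $l_{a,\mu}$, Proposition~\ref{p:conv}, (L1)$\Leftrightarrow$(P1), then forces $\mu_{n_k}\to\mu$, so $w=(a,\mu)\in\mc U(X)$, and a sub-subsequence argument gives $a_n\to a$, $\mu_n\to\mu$. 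In (ii), $a_n\to+\infty$: set $I(x):=l_w(\{x\})$, lower semicontinuous because $x\mapsto\{x\}$ is continuous into $\mf K(X)$. Monotonicity of $l_w$ gives $\Glimsup_n l^{\mb a}_n = l_w\le l_I$, so Theorem~\ref{t:convldlow}, (L)$\Rightarrow$(P1), provides the LD lower bound with rate $I$; the weak LD upper bound amounts, via Theorem~\ref{t:convldup}, (L)$\Rightarrow$(P1), to $\Gliminf_n l^{\mb a}_n\ge l_I$, i.e. to $l_w=l_I$. Here $w\in\mc W(X)$ is used: $l_w$ cannot be a non-Dirac $l_{a',\mu'}$, for otherwise, picking disjoint compacts $K_1,K_2$ of positive $\mu'$-mass and superexponentially scaled recovery sequences gives $\Glimsup_n l^{\mb a}_n(K_1\cup K_2)\le-\tfrac{1}{a'}\log\max(\mu'(K_1),\mu'(K_2))$, strictly below $l_{a',\mu'}(K_1\cup K_2)=-\tfrac{1}{a'}\log(\mu'(K_1)+\mu'(K_2))$, a contradiction; so $l_w=l_{I'}$ for some lower semicontinuous $I'$, necessarily $I'=I$, whence $w=I\in\mc V(X)$, and with the lower bound this is the asserted weak LDP.

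The main obstacle is exactly this identification of $l_w$ in (i) and (ii): that a $\Gamma$-limit of the $l_{a_n,\mu_n}$ lying in the image of $\mc W(X)$ is forced to be of the form $l_{a,\mu}$ when the speeds stay bounded and of the form $l_I$ when they diverge. Everything else --- the contraction lemma, the compactness and sub-subsequence arguments, and the translation between $\Gamma$-convergence of set-maps and probabilistic bounds --- is routine once the (L)$\Leftrightarrow$(P) equivalences and the cited structural facts about the topology of $\Gamma$-convergence are in hand.
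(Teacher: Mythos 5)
Your overall strategy is the same as the paper's (which only sketches it): transport everything to $\mc L(X)$, use the (L)$\Leftrightarrow$(P) equivalences of Proposition~\ref{p:conv} and Theorems~\ref{t:convldlow}--\ref{t:convldup}, and quote \cite[Theorem~10.22]{DM} for metrizability on equicoercive sets. Part (iii), the auxiliary lemma relating $\Gamma$-convergence of $I_n$ on $X$ to $\Gamma$-convergence of $l_{I_n}$ on $\mf K(X)$, and the metrizability statement are fine. The problem is precisely at the point you yourself single out as ``the main obstacle'': the identification of the limit $l_w$. For (ii) your argument is wrong as written. Since $K_1,K_2$ are disjoint with positive $\mu'$-mass, one has $\max(\mu'(K_1),\mu'(K_2))<\mu'(K_1)+\mu'(K_2)$, hence
\begin{equation*}
-\tfrac{1}{a'}\log\max\big(\mu'(K_1),\mu'(K_2)\big)\;>\;-\tfrac{1}{a'}\log\big(\mu'(K_1)+\mu'(K_2)\big)=l_{a',\mu'}(K_1\cup K_2),
\end{equation*}
so the bound you derive on $\Glimsup_n l^{\mb a}_n(K_1\cup K_2)$ sits strictly \emph{above} the putative limit value, not ``strictly below'' it; it is trivially consistent with $\Glim_n l^{\mb a}_n=l_{a',\mu'}$ and yields no contradiction. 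The contradiction has to be extracted the other way around: the $\Gamma$-limsup inequality at $K_1\cup K_2$ gives compacts $Q_n\to K_1\cup K_2$ with $\mu_n(Q_n)\ge e^{-a_n(c_{12}+o(1))}$, where $c_{12}:=-\tfrac1{a'}\log(\mu'(K_1)+\mu'(K_2))<\min(c_1,c_2)$ and $c_i:=-\tfrac1{a'}\log\mu'(K_i)$; splitting $Q_n$ between disjoint closed $\delta$-neighbourhoods of $K_1$ and $K_2$, adjoining $K_i$, and using a pigeonhole/diagonal extraction in $\delta$, one produces compacts converging to some $K_i$ in $\mf K(X)$ along which $l^{\mb a}_n\le c_{12}+o(1)<c_i$, violating the $\Gamma$-liminf inequality at $K_i$ (which holds along subsequences). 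This repair is not cosmetic; without it the weak upper bound in (ii) is unproved.

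For (i) the situation is worse: you give no argument at all, only ``checking that $l_w$ \dots must be of the form $l_{a,\mu}$'', and this check is not routine. With bounded speeds the limit can fail to be of $\mc U(X)$-type because mass may escape to infinity: for $a_n\equiv a$ and $\mu_n=\tfrac12\delta_{x_0}+\tfrac12\delta_{z_n}$ with $z_n$ eventually leaving every compact, $l_{a,\mu_n}$ $\Gamma$-converges on $\mf K(X)$ to $l_I$ with $I(x_0)=\tfrac{\log 2}{a}$ and $I\equiv+\infty$ elsewhere, which lies in the image of $\mc V(X)$, while $(\mu_n)$ has no narrow limit. So the identification in (i) requires extracting tightness (equicoercivity) from the hypothesis on the limit, or an additional assumption, and in any case an argument; similarly the ``dual'' additivity-versus-minimum dichotomy (two recovery compacts around distinct finite-rate points carrying masses that add, versus $l_I(K_1\cup K_2)=\min$) must be spelled out to rule out $l_w=l_I$ with two finite-rate points when the speeds stay bounded, and to force $a'=\lim_k a_{n_k}$. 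As it stands, the proposal proves (iii) and the metrizability claim, reduces (i)--(ii) correctly to the identification of $l_w$, but then either omits (case (i)) or incorrectly argues (case (ii)) that identification.
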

Roughly speaking, the previous theorem states that the topology induced by $\mc E$ on measures is the usual topology of narrow convergence, the topology it induces on functionals is the topology of $\Gamma$-convergence. However, while the space of functionals on $X$ is compact under this topology, it can happen that measures converge to functionals, and this is the case iff a LDP holds. It is worth to remark that up to identification $\mc W(X)$ can be regarded as a subset of the set $\mc Q(X)$ introduced in \cite[Chapter 4.7]{DZ}, and while the topology of $\mc L(X)$ does not induce the topology on $\mc Q(X)$ therein considered,  the two topologies coincide on $\mc W(X)$.

%

\section{Second order Sanov Theorem}
\label{s:6}
In this section we give a simple application of the results in section~\ref{s:3}. Sanov Theorem states that, if the random variables $(x_i)_{i\in \bb N}$ are i.i.d.\ with law $\mu \in \mc P(X)$, then the law of their empirical measure satisfies a LDP with speed $(n)$ and rate $H(\cdot|\mu)$. The result also holds if the law $\mu_n$ of the random variables depends on $n$, provided $\mu_n \to \mu$. However, if $(\mu_n)$ concentrates in the sense of \eqref{e:suppinc}, the LD  can admit a nontrivial "second order" expansion. To fix the idea, suppose that $\mu_n \to \mu =\delta_x$ for some $x\in X$. Then the functional $H(\cdot|\delta_x)$ is trivial (in the sense of section~\ref{s:5}), and the speed $(n)$ is not the interesting one. More in general, several non-trivial LDPs may hold, as shown in the following theorem (this result already appeared in the literature in the context of diffusion processes \cite{DG}, and closer to the framework of this paper in \cite{Le}).
\begin{theorem}
\label{t:sanov}
Let $(\mu_n)$ be a sequence converging to $\mu$ in $\mc P(X)$, and define the empirical measure $\pi_n \colon X^n \to \mc P(X)$ as
\begin{equation*}
\pi_n:= \tfrac{1}{n} \sum_{i=1}^n \delta_{x_i}
\end{equation*}
Then the law $\bb P_n:=\mu_n^{\otimes n} \circ \pi_n^{-1}$ of $\pi_n$ under the product measure $\mu_n^{\otimes n}$ satisfies a LDP on $\mc P(X)$ with speed $(n)$ and rate $H(\cdot|\mu)$.

Assume furthermore that $(\mu_n)$ satisfies a LDP on $\mc P(X)$ with speed $(a_n)$ and lower semicontinuous, coercive rate $I \colon \mc P(X) \to [0,+\infty]$. Then the law of $\pi_n$ satisfies a LDP on $\mc P(X)$ with speed $(n\,a_n)$ and lower semicontinuous, coercive rate $\mc I \colon \mc P(X) \to [0,+\infty]$
\begin{equation*}
\mc I(\nu):= \int_X \nu(dx)\,I(x)
\end{equation*}
\end{theorem}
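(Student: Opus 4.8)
The plan is to reduce both assertions to $\Gamma$-convergence statements for relative entropies plus exponential tightness, via the characterizations of Section~\ref{s:3}. Since $X$ is Polish so are $\mc P(X)$ and $\mc P(\mc P(X))$, and Theorems~\ref{t:convldlow}--\ref{t:convldup} apply with ambient Polish space $\mc P(X)$ and the sequence $(\bb P_n)$ in $\mc P(\mc P(X))$. Write $b_n=n$ for the first statement, $b_n=n\,a_n$ for the second, put $\bb H^{\mb b}_n:=\tfrac1{b_n}H(\cdot|\bb P_n)$ and $\rho(\nu):=H(\nu|\mu)$, resp.\ $\rho(\nu):=\mc I(\nu)$. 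By the equivalence (P1)$\Leftrightarrow$(H1) in those theorems, the remark after Definition~\ref{d:ld}, and Theorem~\ref{t:tightld}, it suffices to prove, for all $\nu\in\mc P(X)$: (a)~$\big(\Glimsup_n\bb H^{\mb b}_n\big)(\delta_\nu)\le\rho(\nu)$; (b)~$\big(\Gliminf_n\bb H^{\mb b}_n\big)(\delta_\nu)\ge\rho(\nu)$; (c)~$(\bb H^{\mb b}_n)$ is equicoercive. Two elementary facts drive the argument: the tensorization identity $H\big(\bigotimes_j\sigma_j\,\big|\,\bigotimes_j\tau_j\big)=\sum_j H(\sigma_j|\tau_j)$, and the dual inequality $H(Q|\mu_n^{\otimes n})\ge\sum_{j=1}^n H(Q_j|\mu_n)\ge n\,H(\overline Q|\mu_n)$ for $Q\in\mc P(X^n)$ with marginals $Q_j$, the first step by the chain rule and convexity of $H(\cdot|\mu_n)$, the second again by convexity, with $\overline Q:=\tfrac1n\sum_j Q_j$; crucially $\overline Q$ depends on $Q$ only through $Q\circ\pi_n^{-1}$, being its barycenter in $\mc P(\mc P(X))$.

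For (b), let $R_n\to\delta_\nu$ in $\mc P(\mc P(X))$; after a subsequence we may assume $\tfrac1{b_n}H(R_n|\bb P_n)$ tends to its lower limit, assumed finite. By \eqref{e:Hproj} and the above, any $Q$ with $Q\circ\pi_n^{-1}=R_n$ satisfies $H(Q|\mu_n^{\otimes n})\ge n\,H(\kappa_n|\mu_n)$ with $\kappa_n:=\mathrm{bar}(R_n)$, so $\tfrac1{b_n}H(R_n|\bb P_n)\ge\tfrac n{b_n}H(\kappa_n|\mu_n)$; and $\kappa_n\to\nu$ in $\mc P(X)$ since $\rho\mapsto\rho(f)$ is bounded continuous for $f\in C_\mathrm{b}(X)$. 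For $b_n=n$ this gives $\varliminf_n\tfrac1nH(R_n|\bb P_n)\ge\big(\Gliminf_nH(\cdot|\mu_n)\big)(\nu)=H(\nu|\mu)$ by Proposition~\ref{p:conv}; for $b_n=n\,a_n$ it gives $\varliminf_n\tfrac1{n\,a_n}H(R_n|\bb P_n)\ge\big(\Gliminf_n\tfrac1{a_n}H(\cdot|\mu_n)\big)(\nu)\ge\nu(I)$ by Theorem~\ref{t:convldup}-(H2) applied to $(\mu_n)$, which satisfies the LD upper bound with rate $I$. The same inequality yields (c): an $\ell$-sublevel set of $\bb H^{\mb b}_n$ is carried by $\mathrm{bar}$ into the $\ell$-sublevel set of $H(\cdot|\mu_n)$ (case $b_n=n$), resp.\ of $\tfrac1{a_n}H(\cdot|\mu_n)$ (case $b_n=n\,a_n$); the union over $n$ is precompact in $\mc P(X)$ by Proposition~\ref{p:tight} (as $\mu_n\to\mu$ is a tight sequence), resp.\ Theorem~\ref{t:tightld} (as $(\mu_n)$, satisfying the LD upper bound with the coercive rate $I$, is exponentially tight --- apply that bound to the compacts $\{I\le L\}$ and let $L\to\infty$); so the barycenters of an equibounded family form a tight set, and a Markov inequality on $\rho\mapsto\rho(K^c)$ promotes this to tightness of the family of $R$'s. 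Hence $(\bb P_n)$ is exponentially tight with speed $(b_n)$ and the weak upper bound from (b) becomes a full one. For $b_n=n$, (a) follows immediately: Proposition~\ref{p:conv} gives $\Glim_nH(\cdot|\mu_n)=H(\cdot|\mu)$, hence some $\nu_n\to\nu$ has $\varlimsup_nH(\nu_n|\mu_n)\le H(\nu|\mu)$; setting $Q_n=\nu_n^{\otimes n}$, a law of large numbers gives $Q_n\circ\pi_n^{-1}\to\delta_\nu$, while $\tfrac1nH(Q_n\circ\pi_n^{-1}|\bb P_n)\le\tfrac1nH(\nu_n^{\otimes n}|\mu_n^{\otimes n})=H(\nu_n|\mu_n)$ by \eqref{e:Hproj} and tensorization. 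This proves the first statement.

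There remains (a) for $b_n=n\,a_n$; we may assume $\mc I(\nu)=\nu(I)<\infty$. Since $(\mu_n)$ satisfies the LD lower bound with rate $I$, Theorem~\ref{t:convldlow}-(H1) and Remark~\ref{r:Glim}-(a) furnish, for each $x$ with $I(x)<\infty$, a sequence $\rho^x_n\to\delta_x$ in $\mc P(X)$ with $\varlimsup_n\tfrac1{a_n}H(\rho^x_n|\mu_n)\le I(x)$. Fix a partition $(E^i_{\delta,\ell})_{i=0}^{N_{\delta,\ell}}$ as in Remark~\ref{r:E} with $\mu=\nu$ there and $K_\ell$ also satisfying $\nu(K_\ell)\to1$; since $\nu(I)<\infty$, every cell with positive $\nu$-mass has $\inf I<\infty$ on it, so one may pick $x_i$ in it with $I(x_i)\le\inf_{E^i_{\delta,\ell}}I+\eps$. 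Let $n_i:=\lfloor n\,\nu(E^i_{\delta,\ell})\rfloor$ for $i\ge1$, let the remaining $n_0$ coordinates carry $\mu_n$, and put $\tilde\mu_n:=\mu_n^{\otimes n_0}\otimes\bigotimes_{i\ge1}(\rho^{x_i}_n)^{\otimes n_i}$. Tensorization gives $\tfrac1{n\,a_n}H(\tilde\mu_n|\mu_n^{\otimes n})=\sum_{i\ge1}\tfrac{n_i}n\,\tfrac1{a_n}H(\rho^{x_i}_n|\mu_n)$, with $\varlimsup_n$ at most $\sum_{i\ge1}\nu(E^i_{\delta,\ell})\,I(x_i)$; a law of large numbers gives $\tilde\mu_n\circ\pi_n^{-1}\to\delta_{\nu_{\delta,\ell}}$, where $\nu_{\delta,\ell}:=\nu(E^0_{\delta,\ell})\,\mu+\sum_{i\ge1}\nu(E^i_{\delta,\ell})\,\delta_{x_i}$; and $\tfrac1{n\,a_n}H(\tilde\mu_n\circ\pi_n^{-1}|\bb P_n)\le\tfrac1{n\,a_n}H(\tilde\mu_n|\mu_n^{\otimes n})$ by \eqref{e:Hproj}. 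Letting $\delta\downarrow0$, $\ell\uparrow\infty$ along the refinement (Remark~\ref{r:E}-(v)): $\nu_{\delta,\ell}\to\nu$ in $\mc P(X)$, and since $I$ is lower semicontinuous the infimum of $I$ over the cell containing a point increases to $I$ there for $\nu$-a.e.\ point, so by monotone convergence $\varlimsup\big(\sum_{i\ge1}\nu(E^i_{\delta,\ell})\,I(x_i)\big)\le\nu(I)$. A diagonal choice $\delta_n\downarrow0$, $\ell_n\uparrow\infty$ then produces $R_n\to\delta_\nu$ with $\varlimsup_n\tfrac1{n\,a_n}H(R_n|\bb P_n)\le\nu(I)$, hence $\big(\Glimsup_n\bb H^{\mb b}_n\big)(\delta_\nu)\le\mc I(\nu)$ by Remark~\ref{r:Glim}-(ii). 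Finally $\mc I$ is lower semicontinuous ($I$ being a supremum of bounded continuous functions) and coercive (a Markov inequality shows $\{\nu:\nu(I)\le\ell\}$ is tight), so what is obtained is a genuine LDP with rate $\mc I$.

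The one genuinely delicate step is the last paragraph: it is where $\nu$ must be discretized and the limit taken along a diagonal, and the construction forces the recovery measures $\rho^{x_i}_n$ of the LDP for $(\mu_n)$ to be taken at points with $I<\infty$ --- which is exactly where $\nu(I)<\infty$ and the regularity of the partition of Remark~\ref{r:E} are used --- and it requires the law of large numbers to be applied uniformly enough in the slowly varying block laws $\rho^{x_i}_n\to\delta_{x_i}$.
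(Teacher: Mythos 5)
Your proposal is correct, and its overall architecture is exactly the paper's: reduce both LDPs to the entropy statements (H1)/(H2) of Theorems~\ref{t:convldlow}--\ref{t:convldup} applied on the Polish space $\mc P(X)$, prove the upper bound by showing $\tfrac1{b_n}H(R_n|\bb P_n)\ge \tfrac{n}{b_n}H(\kappa_n|\mu_n)$ for a one-dimensional reduction $\kappa_n\to\nu$, prove the lower bound by tensorizing recovery sequences, and upgrade the weak bounds via equicoercivity and Theorem~\ref{t:tightld}. Your upper-bound reduction (superadditivity over marginals plus convexity, with the observation that the averaged marginal is the barycenter of $R_n$ and hence a function of $R_n$ alone) is essentially the paper's symmetrization argument via \eqref{e:Hproj2}, just organized without symmetrizing first; your tightness argument likewise matches the paper's, phrased slightly more carefully at the level of sublevel sets. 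The one genuine divergence is the lower bound at speed $(n\,a_n)$: the paper simply invokes the equivalence (P1)$\Leftrightarrow$(H2) in Theorem~\ref{t:convldlow} for the sequence $(\mu_n)$, which hands you directly a single sequence $\nu_n\to\nu$ with $\varlimsup_n\tfrac1{a_n}H(\nu_n|\mu_n)\le\nu(I)$, and then tensorizes $\nu_n^{\otimes n}$ exactly as in the speed-$(n)$ case --- two lines. You instead rebuild the content of the implication (H1)$\Rightarrow$(H2) by hand on the product space: discretizing $\nu$ via Remark~\ref{r:E}, choosing near-optimal points $x_i$, forming block product measures from the (H1) recovery sequences $\rho^{x_i}_n$, and diagonalizing. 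This is correct (and the monotone-convergence step you invoke is not even needed, since $\sum_i\nu(E^i_{\delta,\ell})I(x_i)\le\nu(I)+\eps$ holds directly from the choice of the $x_i$), but it duplicates machinery the paper's Section~\ref{s:3} was built to package; recognizing that (H2) of Theorem~\ref{t:convldlow} already encodes this mixture construction would have collapsed your last paragraph to the paper's argument. The only other remarks are minor: the exponential tightness of $(\mu_n)$ from the coercive rate is cited by the paper from \cite[Ex.\ 4.1.10]{DZ} and deserves a citation rather than the quick sketch you give, and the laws of large numbers for triangular arrays with slowly varying laws should at least be noted as the variance computation that makes them true.
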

\begin{proof}
Fix $\nu \in \mc P(X)$.

\smallskip
\noindent
\textit{Lower bound with speed $(n)$.} By Proposition~\ref{p:conv}-(H), there exists $\nu_n \to \nu$ such that $\varlimsup_n H(\nu_n|\mu_n) \le H(\nu|\mu_n)$. Take $\bb Q_n:= \nu_n^{\otimes n} \circ \pi_n^{-1}$. Then $\bb Q_n \to \delta_{\nu}$ and 
\begin{equation}
\label{e:Hextgamma}
\tfrac{1}{n} H(\bb Q_n|\bb P_n) =\tfrac{1}{n} H(\nu_n^{\otimes n} \circ \pi_n^{-1}|\mu_n^{\otimes n} \circ \pi_n^{-1}) \le \tfrac{1}{n}  H(\nu_n^{\otimes n}|\mu_n^{\otimes n})= H(\nu_n|\mu_n)
\end{equation}
so that we conclude by Theorem~\ref{t:convldlow}-(H1).

\smallskip
\noindent
\textit{Lower bound with speed $(n\,a_n)$.} By Theorem~\ref{t:convldlow}-(H2), there exists there exists $\nu_n \to \nu$ such that $\varlimsup_n \tfrac{1}{a_n} H(\nu_n|\mu_n) \le \mc I(\nu)$. Take $\bb Q_n:= \nu_n^{\otimes n} \circ \pi_n^{-1}$. By the same calculation as in \eqref{e:Hextgamma} and Theorem~\ref{t:convldlow}-(H1) we conclude.

\smallskip
\noindent
\textit{Weak upper bound with speed $(n)$.} Let $(\bb Q_n)$ be a sequence in $\mc P(\mc P(X))$ such that $\bb Q_n \to \delta_\nu$. We want to prove $\varliminf_n H(\bb Q_n|\bb P_n) \ge H(\nu|\mu)$. One can assume $\bb Q_n= \gamma_n \circ \pi_n^{-1}$ for some $\gamma_n \in \mc P(X^n)$ the relative entropy being infinite otherwise, see \eqref{e:Hproj}. Since $\pi_n({\bf x})=\pi_n({\bf x'})$ iff ${\bf x'}$ is obtained from ${\bf x}$ by an index permutation, $\gamma_n$ can be assumed invariant under index permutation as well, see \eqref{e:Hproj2}, to obtain
\begin{equation*}
H(\bb Q_n |\bb P_n)= H(\gamma_n \circ \pi_n^{-1}|\mu_n^{\otimes n}\circ \pi_n^{-1})=H(\gamma_n|\mu_n^{\otimes n})
\end{equation*}
Let $\nu_n \in \mc P(X)$ be the one-dimensional marginal of $\gamma_n$. By the explicit representation \eqref{e:Hdef} of the relative entropy and its convexity, reasoning as in \eqref{e:Hboundeasy}
\begin{equation*}
H(\bb Q_n |\bb P_n)= H(\gamma_n|\mu_n^{\otimes n}) \ge H(\nu_n^{\otimes n}|\mu_n^{\otimes n})= n \,H(\nu_n|\mu_n)
\end{equation*}
On the other hand $\bb Q_n \to \delta_\nu$ implies $\nu_n \to \nu$, so that by Proposition~\ref{p:conv}-(H)
\begin{equation*}
\varliminf_n \tfrac{1}{n} H(\bb Q_n|\bb P_n) \ge \varliminf_n H(\nu_n|\mu_n) \ge H(\nu|\mu)
\end{equation*}

\smallskip
\noindent
\textit{Weak upper bound with speed $(n\,a_n)$.} Following the same strategy of the bound with speed $(n)$, we obtain
\begin{equation*}
\varliminf_n \tfrac{1}{n\,a_n} H(\bb Q_n|\bb P_n) \ge \varliminf_n \tfrac{1}{a_n} H(\nu_n|\mu_n) \ge \int \nu(dx)\,I(x)
\end{equation*}
where in the last inequality we used Theorem~\ref{t:convldup}-(H2). We conclude by applying Theorem~\ref{t:convldup}-(H1) to $(\bb Q_n)$.

\smallskip
\textit{Exponential tightness.} With the same notation of the upper bound proofs, if $\tfrac{1}{n} H(\bb Q_n|\bb P_n)$ is uniformly bounded then $H(\nu_n|\mu_n)$ also is. Since $\mu_n$ is tight, Proposition~\ref{p:tight} implies that $\nu_n$ is tight and thus $\bb Q_n$ is tight as well.
By Theorem~\ref{t:tightld} we conclude that $(\bb P_n)$ is exponentially tight with speed $(n)$. Since $(\mu_n)$ satisfies a LDP with speed $(n\,a_n)$ and $I$ is coercive, $(\mu_n)$ is exponentially tight with this speed \cite[Ex.\ 4.1.10]{DZ}, and the same proof yields that $(\bb P_n)$ is exponentially tight with speed $(n\,a_n)$.
\end{proof}

An example of application of Theorem~\ref{t:sanov}, is the extension of well known results about LD for the empirical measure of independent random walks or diffusion processes. Let $(X_i^n)_{i\in \bb N}$ be a family of stochastic process $X_i^n \in D([0,+\infty[; \bb R^d)$, all starting at ${\bf 0}$ (the case of different initial conditions for each $X_i^n$ could also be fitted in this framework, but we keep the notation simple). If $X_i^n$ converges in law to some limit process $X$ (e.g.\ $X_i^n$ is a parabolically-rescaled symmetric random walk converging to a brownian motion or a Levy process in case of heavy tails), then the LD of the empirical measure $\pi_n$ happen with speed $(n)$, for instance the results in \cite{KO} can be recovered from the first part of Theorem~\ref{t:sanov} by a contraction principle. However, if $X_i^n$ converges to a deterministic trajectory (e.g.\ $X_i^n$ is a hyperbolically-rescaled asymmetric random walk, converging to a uniform motion), then the LD  happen with a faster speed. For instance one can recover the results in \cite{KL} by the second part of Theorem~\ref{t:sanov} by a contraction principle.

A most interesting open problem related to the above framework is the analysis of the LD for the empirical measure of a totally asymmetric simple exclusion process on $\bb Z$ (TASEP). Indeed, the law of the path of a particle $X_i$ is independent of the law of the other particles conditionally to the path of the particle at its right (provided the TASEP moves right). TASEP would therefore fit in the framework of Theorem~\ref{t:sanov}, except that $(x_i)$ is now a Markov chain (not an i.i.d.\ sequence), and $\mu_n$ is replaced by a jump kernel $\mu_n(x,dy)$. In this case, one still expects the presence of multiple non-trivial speeds for the LD in the same fashion of Theorem~\ref{t:sanov}; however the Markov equivalent of Theorem~\ref{t:sanov} features a richer description, and it is still subject of investigation.

\medskip
\textit{Acknowledgment:} The author has discussed the ideas in this paper with several people, who motivated and helped him to provide a systematic treatment of the subject. In this respect, I especially acknowledge G.Bellettini, L.Bertini, R.Cerf, L.Zambotti. I am also grateful to F.Cagnetti for helpful discussions about \cite[Chapter 10]{DM}. This work has been supported by the PRIN 20155PAWZB \emph{Large Scale Random Structures}. I acknowledge the support of Dipartimento di Matematica, Sapienza Università di Roma.

\end{document}